\newtheorem{theorem}{Theorem}
\newtheorem{lemma}{Lemma}
\theoremstyle{definition}
\newtheorem{remark}{Remark}[section]
\newtheorem*{notations}{Notations}
\numberwithin{equation}{section}
\newcommand{\til}{~}
\newcommand{\e}{\varepsilon}
\newcommand{\Y}{\Upupsilon}
\newcommand{\X}{\Upxi}
\newcommand{\m}{\mathcal{m}}
\newcommand{\elle}{\mathcal{l}}
\newcommand{\p}{\upvarpi}
\newcommand{\R}{\mathbb{R}}
\newcommand{\ints}{\int_{\R^n}}
\newcommand{\dx}{\text{\,d}x}
\newcommand{\dr}{\text{\,d}r}
\newcommand{\ds}{\text{\,d}s}
\newcommand{\dt}{\text{\,d}t}
\newcommand{\dsig}{\text{\,d}\sigma}
\newcommand{\tr}{\text{tr}}
\newcommand{\supp}{\text{supp}\,}
\newcommand{\n}[1]{\left\lVert#1\right\rVert}
\newcommand{\jap}[1]{\left\langle#1\right\rangle}
\newcommand{\ltfrac}[2]{\mbox{\large$\frac{#1}{#2}$}}
\newcommand\mynobreakpar{\par\nobreak\@afterheading} 
\def\ps@pprintTitle{%
	\let\@oddhead\@empty
	\let\@evenhead\@empty
	\def\@oddfoot{\reset@font\hfil\thepage\hfil}
	\let\@evenfoot\@oddfoot
}
\journal{}
\begin{document}

\begin{frontmatter}

\title{Lifespan estimates for the \\ compressible Euler equations with damping \\ via Orlicz spaces techniques}

\author[add2]{Ning-An Lai}
\ead{ninganlai@zjnu.edu.cn}

\author[add1]{Nico Michele Schiavone}
\ead{schiavone@cr.math.sci.osaka-u.ac.jp}

\address[add2]{Department of Mathematics, Zhejiang Normal University, \\ Jinhua 321004, China}

\address[add1]{Department of Mathematics, Graduate School of Science, Osaka University, \\ Toyonaka, Osaka 560-0043, Japan}

\begin{abstract}\small

In this paper we are interested in the upper bound of the lifespan estimate for the compressible Euler system with time dependent damping and small initial perturbations.
We employ some techniques from the blow-up study of nonlinear wave equations. The novelty consists in the introduction of tools from the Orlicz spaces theory to handle the nonlinear term emerging from the pressure $p \equiv p(\rho)$, which admits different asymptotic behavior for large and small values of $\rho-1$, being $\rho$ the density. 
Hence we can establish, in high dimensions $n\in\{2,3\}$, unified upper bounds of the lifespan estimate depending only on the dimension $n$ and on the damping strength, and independent of the adiabatic index $\gamma>1$. We conjecture our results to be optimal. 
The method employed here not only improves the
known upper bounds of the lifespan for $n\in\{2,3\}$, but has potential application in the study of related problems.

\end{abstract}

\begin{keyword}\small
Compressible Euler equations
\sep
time dependent damping
\sep
blow-up
\sep
lifespan
\sep
Orlicz spaces

\medskip

\MSC[2020] 35Q31 \sep 35L65 \sep 35L67 \sep 76N15
\end{keyword}

\end{frontmatter}

%\tableofcontents

%\linenumbers

%%%%%%%%%%%%%%%%%%%%%%%
%%%%%%%%%%%%%%%%%%%%%%%

\section{Introduction}

In this work, we will consider the compressible isentropic  Euler equations with time dependent damping
\begin{subequations}\label{main}
	\begin{empheq}[left=\empheqlbrace]{align}
		&
		\rho_t+\nabla\cdot(\rho u)=0,
		\quad
		(t, x) \in (0, T)\times\R^n,
		\label{eq:main-1}
		\\
		&
		(\rho u)_t +\nabla\cdot(\rho u\otimes u)+\nabla p+\frac{\mu \, \rho u}{(1+t)^{\lambda}} = 0,
		\quad (t, x) \in (0, T)\times\R^n,
		\label{eq:main-2}
		\\
		&
		u(0, x)=\e u_0(x), \quad \rho(0, x)= \overline{\rho} +\e \rho_0(x),
		\label{eq:main_data}
	\end{empheq}
\end{subequations}
where $\rho \colon [0,T) \times \R^{n} \to \R$, $u \colon [0,T) \times \R^{n} \to \R^n$ and $p \colon [0,T) \times \R^{n} \to \R$ stand for the density, velocity and pressure of the fluid respectively, $n\in\{1,2,3\}$ is the spatial dimension, while $\tfrac{\mu}{(1+t)^\lambda}$ is a time dependent frictional coefficient, with $\mu\ge 0$ and $\lambda\ge 1$.
The initial values are a small perturbation of constant states (with $\overline{\rho}>0$), where the \lq\lq smallness'' is quantified by the parameter $\e>0$, and the perturbations $\rho_0, u_0\in C_0^{\infty}(\R^n)$ satisfy
\begin{equation}\label{data}
%\rho_0(x)>0,\qquad
\supp(\rho_0-\overline{\rho}), \, \supp u_0 \subseteq \{x \in \R^n \colon |x|\le R\}
\end{equation}
for some positive constant $R$. We assume that the pressure satisfies the state equation for the gas
\begin{equation*}
	p \equiv p(\rho) = A \rho^{\gamma},
\end{equation*}
where $A>0$ is a constant and $\gamma>1$ is the adiabatic index.
However, it is not restrictive in the following to assume $R=\overline{\rho}=1$ and $A=\frac{1}{\gamma}$.
%(see Remark\til\ref{rmk:AR}).
%
Our study concerns the estimates for the \emph{lifespan} $T_\e$ of the solution, defined as the largest time such that the solution exists and is $C^1$ on $[0,T_\e) \times \R^n$.

%%%

\subsection{The undamped case}\label{subsec:undamped}

Setting $\mu=0$, the system \eqref{main} reduces to the classic Euler equations, a fundamental system in fluid dynamics consisting in a set of quasilinear hyperbolic equations governing the adiabatic and inviscid flow for an ideal fluid. Generally speaking, the compressible Euler equations develop shock waves in finite time for general initial data (see \cite{SiderisThomasesWang2003} and references therein).

The study of formation of singularity for compressible Euler equations was initiated by Sideris in \cite{Sideris1985}, where several blow-up results were established for the $3$-dimensional Euler equations modeling a polytropic, ideal fluid with both large data and small initial perturbations.
In the subsequent works \cite{Sideris1991, Sideris1992} by the same author, the following lifespan estimate
\begin{equation}\label{L3d}
	\exp\left(C_1\e^{-1}\right)
	\le
	T_\e
	\le
	\exp\left(C_2\e^{-2}\right)
\end{equation}
was established in $3D$, where the lower bound was obtained under the assumption that the initial velocity is irrotational and the upper bound holds for $\gamma=2$. In particular, Sideris extended the generic lower bound
\begin{equation}\label{eq:generic-bound}
	T_\e \ge C \e^{-1}
\end{equation}
which typically holds for symmetric hyperbolic systems in any number of space dimensions (see \cite{Kato1975} and \cite{Majda1984} for the general theory).
Here and in the following, we will use $C, C_1, C_2$ to denote some generic positive constants independent of $\e$, the value of which may change in different places.

Rammaha proved in \cite{Rammaha1989} the formation of singularity in finite time for the $2D$ case, and furthermore, for $\gamma=2$, obtained the following upper bound of the lifespan for small perturbations:
\begin{equation*}%\label{Lu2d}
	T_\e \le C\e^{-2}.
\end{equation*}
Later, Alinhac showed in \cite{Alinhac1993} that the lifespan for rotationnally invariant data in $2D$ satisfies
\begin{equation*}%\label{Lul2d}
	\lim_{\e\rightarrow 0} \e^2 T_{\e}=C.
\end{equation*}
The compressible Euler equations in two space dimensions is reconsidered by Sideris in \cite{Sideris1997}, where different lower bounds for the lifespan are established under different assumptions on the initial data. In \cite{LaiXiangZhou2022}, Lai, Xiang and Zhou studied the generalized Riemann problem governed by compressible isothermal ($p=\rho$) Euler equations in $2D$ and $3D$, establishing blow-up results for the fan-shaped wave structure solution. Recently, Jin and Zhou in \cite{JinZhou2020} derived, for $\gamma=2$, the following upper bound for the lifespan estimate:
\begin{equation}\label{L123-upper}
	T_\e
	\le
	\left\{
		\begin{aligned}
			& C\e^{-1} &&\text{if $n=1$,}\\
			& C\e^{-2} &&\text{if $n=2$,}\\
			& \exp(C\e^{-1}) &&\text{if $n=3$,}
		\end{aligned}
	\right.
\end{equation}
which in particular improves the upper bound in \eqref{L3d}, and shows the optimality of the lifespan estimate in $3$-dimensions when $\gamma=2$. To be precise, in their work only the $3D$ case is explicated, but their method and Lemma\til2.1 therein work also in lower dimensions.

On the other side, putting together \eqref{eq:generic-bound}, which holds from the general theory, and the results in \cite{Alinhac1993,Sideris1997} and in \cite{Sideris1991}, we find that the lower bounds corresponding to \eqref{L123-upper} hold for any $\gamma>1$, namely
\begin{equation*}\label{L123-lower}
	T_\e
	\ge
	\left\{
	\begin{aligned}
		& C\e^{-1} &&\text{if $n=1$,}\\
		& C\e^{-2} &&\text{if $n=2$,}\\
		& \exp(C\e^{-1}) &&\text{if $n=3$.}
	\end{aligned}
	\right.
\end{equation*}
In particular, we can see that the lifespan estimates in \eqref{L123-upper} are optimal at least for $\gamma=2$ in $1D$ and $3D$, and for any $\gamma>1$ if $n=2$ (due to \cite{Alinhac1993}). Actually, we anticipate that they are optimal for any $\gamma>1$ also in the $1D$ case thanks to the results in \cite{Sugiyama2018}, but we will properly discuss them in the next subsection.

\subsection{The damped case}

Let us turn our attention to the problem in the presence of a time dependent damping term.
Wang and Yang in \cite{WangYang2001} proved global existence in $\R^n$ for the system \eqref{main} with a positive constant damping (i.e. $\mu>0$, $\lambda=0$) and small initial perturbation of some constant state. Sideris, Thomases and Wang \cite{SiderisThomasesWang2003} restudied the same case in $\R^3$, showing that the damping term can prevent the development of singularities in finite time if the initial perturbations are small, but on the contrary it is not strong enough to prevent the blow-up for large initial data, even if they are smooth.

Hou and Yin in \cite{HouYin2017}, and Hou, Witt and Yin in \cite{HouWittYin2018} studied the system \eqref{main} in $2D$ and $3D$, proving: global existence for $0\le\lambda<1$, $\mu>0$ and for $\lambda=1$, $\mu>3-n$; finite time blow-up for $\lambda>1$, $\mu>0$ and for $\lambda=1$, $0<\mu\le1$, $n=2$. Also, they established the following upper bound for the lifespan estimate:
\begin{equation}\label{L1}
	T_\e\le \exp\left(C\e^{-2}\right)
\end{equation}
for $\gamma=2$ (see the details in \cite[p. 2511]{HouYin2017} for $n=2$ and in \cite[p. 416]{HouWittYin2018} for $n=3$). In \cite{Pan2016-JMAA, Pan2016-NA}, Pan studied the corresponding problem in $1$-dimension, showing that: if $0\le\lambda<1$, $\mu>0$ or if $\lambda=1$, $\mu>2$, there exists a global solution; if $\lambda=1$, $0\le\mu\le2$ or if $\lambda>1$, $\mu\ge0$, the $C^1$-solutions will blow up in finite time. Moreover, in the latter case, the same lifespan estimate as \eqref{L1} was established in \cite{Pan2016-JMAA} for $\gamma=2$.
From these results, one infers that the point $(\lambda,\mu)=(1,n-3)$ is critical for the problem.
Notice that both the blow-up results in \cite{Pan2016-JMAA, Pan2016-NA} and \cite{HouYin2017, HouWittYin2018} are established exploiting the method developed by Sideris in \cite{Sideris1985}: a $1D$ semilinear-type wave equation is constructed for some average quantity related to the density, and then the d'Alembert's formula is used to establish an ordinary differential inequality.

In the paper \cite{Sugiyama2018}, Sugiyama studied the system \eqref{main} in $1D$, in its equivalent form obtained by changing the Eulerian coordinates into Lagrangian ones. Explicitly, in \cite{Sugiyama2018} it is considered the so-called $p$-system
\begin{equation}\label{eq:psystem}
	\left\{
	\begin{aligned}
		&
		v_t-u_x=0,
		%\quad (t, x) \in (0, T)\times\R,
		\\
		&
		u_t + p_x + a(t,x) v = 0,
		%\quad (t, x) \in (0, T)\times\R,
		\\
		&
		u(0, x)=\e u_0(x), \quad v(0, x)= 1 +\e v_0(x),
	\end{aligned}
	\right.
\end{equation}
where $p \equiv p(v) = \tfrac{v^{-\gamma}}{\gamma}$ and $v=1/\rho$ is the specific volume.
For the space-independent damping $a(t,x)=\frac{\mu}{(1+t)^\lambda}$, making use of Riemann invariants, the author was able to establish the estimates
\begin{equation}\label{eq:1Dlifespan}
	T_\e
	\le
	\left\{
		\begin{aligned}
			& C \e^{-1} &&\text{for $\lambda>1$ and $\mu\ge0$,}
			\\
			& C \e^{-\tfrac{2}{2-\mu}} &&\text{for $\lambda=1$ and $0\le\mu<2$,}
			\\
			& \exp(C \e^{-1})  &&\text{for $\lambda=1$ and $\mu=2$,}
		\end{aligned}
	\right.
\end{equation}
not only from above, but also from below, namely obtaining the sharp lifespan estimates for small $\e$, provided that $\lim_{x \to -\infty} (u_0(x), v_0(x)) = (u_-, v_-) \in \R^2$. For the study of problem \eqref{eq:psystem}, see also \cite{ChenLiLiMeiZhang2020} and the references therein.

%%%

At the best of our knowledge, except for the above nice results in $1D$ by Sugiyama, the lifespan estimate in $2D$ and $3D$ for \eqref{main} seems to be still unclear, especially when $\gamma\neq2$, even in the undamped case for $n=3$.
The method in \cite{Pan2016-JMAA, Pan2016-NA, HouYin2017, HouWittYin2018}, successfully proves the blow-up of the solutions, but provides, as upper bound of the lifespan, the estimate \eqref{L1} for $\gamma\ge 2$, which unfortunately seems to be not the optimal one: indeed, it is reasonable to think that this should be closely related to the dimension $n$ and to the damping strength.

Furthermore, in the literature cited until now often only the case $\gamma=2$ is explicitly considered. The modifications necessary to generalize the proof for any $\gamma>1$ are then indicated, pointing to the fact that the lifespan estimates may be modified, but it is not clear how. This happens in the works \cite{Sideris1985,JinZhou2020} for $n=3$ and \cite{Rammaha1989} for $n=2$ in the undamped case, and similarly in the works \cite{HouYin2017} and \cite{Pan2016-JMAA} for the case with the time dependent damping term.

Nevertheless, the lower bounds of the lifespan for \eqref{L123-lower} are independent of $\gamma$ in the undamped case, and they are optimal for $n\in\{1,2\}$ and any $\gamma>1$, as we observed at the end of the previous subsection.
Thus, it is natural to believe that this phenomena should be true also for the bounds of the lifespan in the damped case, as actually verified in \cite{Sugiyama2018} for $n=1$. Our results go exactly in this direction, see Remark\til\ref{rmk:gammaindep}.

\section{Aims and results}

In this paper, we are going to employ an argument based on the manipulation of suitable multipliers to absorb the damping term, and a variation of a lemma on differential inequality established in Li and Zhou \cite{LiZhou1995}.
Similar techniques were applied in the context of the blow-up study for semilinear damped wave equations with small initial data --- as we will show, the density satisfies this kind of equation (see Section\til\ref{sec:reformulation}).
About the damped wave equation, there exist a huge and extensive literatures. Since the case treated in this work can be correspondent to the scattering case (when $\lambda>1$) and to the scale-invariant case (when $\lambda=1$) of the damped wave equation according to the Wirth's classification (see \cite{Wirth2004,Wirth2006,Wirth2007}), we cite only the works \cite{LiuWang2020,LaiTakamura2018,WakasaYordanov2019} and
\cite{DAbbicco2015,DAbbiccoLucenteReissig2015,DAbbiccoLucente2015,Palmieri2019,KatoSakuraba2019,Wakasugi2014,KatoTakamuraWakasa2019,ImaiKatoTakamuraWakasa2020,LaiTakamuraWakasa2017,IkedaSobajima2018,TuLin2017,LinTu2019} for the two cases respectively, but we refer the reader to the Introduction of \cite{LaiSchiavoneTakamura2020} and references therein for a comprehensive presentation.

The novelty in our work consists in mixing these techniques, which manage the linear part of the equation, with tools from the Orlicz spaces theory, which seems to be a suitable setting to deal with the non-power-like nonlinearity appearing in \eqref{eq:wave} below in the case of $\gamma\neq2$. In this way, we are able to derive upper bounds of the lifespan not only merely in the case $\gamma=2$, improving the existing ones, but also in the general case $\gamma\neq2$, obtaining the \emph{papabili} optimal results (see Remark\til\ref{rmk:damped_optimality}).
To the best of our knowledge, the use of Orlicz space theory in the context of blow-up and lifespan estimates for nonlinear wave equations seems to be new, and we are confident they can be potentially applied to other problems (see Remark\til\ref{rmk:otherproblems}).

Let us present now our main results.

\begin{theorem}\label{thm1}
	Let $n\in\{1,2,3\}$ and $\gamma>1$. Suppose $\lambda>1$ and $\mu\ge0$.
	Assume that the initial data satisfy \eqref{data} and
	\begin{equation}\label{eq:datapositivity}
		\ints \rho_0 \phi \dx >0,
		\qquad
		\ints u_0 \cdot \nabla\phi \dx > 0,
	\end{equation}
	where $\phi$ is defined in \eqref{def:phi} below.
	Then the system \eqref{main} has no global solutions, and the lifespan estimate satisfies
	\begin{equation*}
		T_\e \le
		\left\{
		\begin{aligned}
			&C\e^{-\ltfrac{2}{3-n}}
			&&\text{if $n \in \{1,2\}$,}
			\\
			&\exp\left(C \e^{-1}\right)
			&&\text{if $n=3$,}
		\end{aligned}
		\right.
	\end{equation*}
	provided $\e \le \e_0$ for some positive constant $\e_0 \equiv \e_0(n,\lambda,\mu,\gamma,u_0,\rho_0)$.
\end{theorem}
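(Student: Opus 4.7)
My plan is to recast the Euler system as a semilinear damped wave equation for the density perturbation $\tilde\rho := \rho - 1$, to test this equation against a suitable weight, and finally to extract the lifespan from an Orlicz-adapted Li--Zhou type differential inequality. Differentiating \eqref{eq:main-1} in $t$, taking the divergence of \eqref{eq:main-2}, and using $\nabla\cdot(\rho u) = -\rho_t$ one obtains
\begin{equation*}
\tilde\rho_{tt} + \tfrac{\mu}{(1+t)^{\lambda}}\tilde\rho_t - \Delta \tilde\rho = \Delta N(\tilde\rho) + \partial_i\partial_j(\rho u_i u_j),
\end{equation*}
where $N(s) := p(1+s) - p(1) - s$ satisfies $N(s)\sim s^2$ as $s\to 0$ and $N(s)\sim s^\gamma$ as $s\to+\infty$. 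This two-sided asymptotic behaviour of $N$ is precisely what forces the Orlicz viewpoint whenever $\gamma \neq 2$. Since $\lambda > 1$, the integrating factor $m(t) := \exp\bigl(\int_0^t \mu(1+s)^{-\lambda}\,\ds\bigr)$ is bounded above and away from zero, so multiplying by $m(t)$ absorbs the damping into multiplicative constants comparable to $1$ and reduces the picture to an effectively undamped problem.

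Next I would introduce the functional $F(t) := \ints \tilde\rho(t,x)\,\phi(t,x)\,\dx$, with $\phi$ the positive (Yordanov--Zhang-type) test function of \eqref{def:phi}, tailored to the adjoint linear equation. The positivity assumption \eqref{eq:datapositivity} guarantees $F(0), F'(0) \gtrsim \e$. Integrating the reformulated equation against $m(t)\phi$ twice by parts in space turns the linear piece into $F''(t)$ modulo controlled remainders coming from $m(t)$ and $\phi_t$; the convective contribution $\partial_i\partial_j(\rho u_i u_j)$ is non-negative up to lower order after a further integration by parts, and the net effect is the key inequality
\begin{equation*}
F''(t) \ge C \ints N(\tilde\rho(t,x))\,\phi(t,x)\,\dx - (\text{bounded remainder}).
\end{equation*}

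The hardest step will be closing this into a differential inequality for $F$ alone. Because $N$ is not a pure power when $\gamma \neq 2$, the classical Jensen inequality does not apply directly, and this is exactly where the Orlicz-spaces machinery of \cite{KrasnoselskiiRutickii1961, RaoRen1991} becomes essential. Pick a Young function $\Phi$ equivalent to $N$, so that $\Phi(s) \sim s^2$ near $0$ and $\Phi(s) \sim s^\gamma$ for $s$ large, and invoke Jensen's inequality for Orlicz modulars against the probability measure $\phi(t,x)\,\dx / \|\phi(t,\cdot)\|_{L^1(\Om(t))}$ on the finite-speed-of-propagation support $\Om(t) := \{x \in \R^n \colon |x| \le R + t\}$:
\begin{equation*}
\ints N(\tilde\rho)\,\phi\,\dx \;\gtrsim\; \|\phi(t,\cdot)\|_{L^1(\Om(t))}\,\Phi\!\left(\frac{|F(t)|}{\|\phi(t,\cdot)\|_{L^1(\Om(t))}}\right).
\end{equation*}

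With $\|\phi(t,\cdot)\|_{L^1(\Om(t))}$ having explicit polynomial-type growth in $t$, the previous display produces a second-order Orlicz-type ODI of the shape $F''(t) \ge C(1+t)^{-\alpha_n}\,\Phi\bigl(F(t)/(1+t)^{\beta_n}\bigr)$ with data of size $\e$. An Orlicz-adapted variant of the Li--Zhou lemma of \cite{LiZhou1995} then forces $F$ to blow up in finite time. For small $\e$ the rescaled argument $F(t)/(1+t)^{\beta_n}$ remains inside the small-value regime of $\Phi$, so only the quadratic branch $\Phi(s) \sim s^2$ contributes to the effective blow-up exponent --- which is precisely why the final lifespan does not see $\gamma$. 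Tracking the exponents in each dimension then yields $T_\e \le C\e^{-2/(3-n)}$ for $n \in \{1,2\}$ and $T_\e \le \exp(C\e^{-1})$ for $n = 3$.
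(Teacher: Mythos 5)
Your architecture coincides with the paper's: reduction to the damped wave equation \eqref{eq:wave}, testing with the Yordanov--Zhang weight $\psi=e^{-t}\phi$, absorption of the damping by the bounded multiplier $\m(t)$ (legitimate precisely because $\lambda>1$), discarding the non-negative convective term, and closing with a two-branch Li--Zhou lemma whose small-argument quadratic branch alone dictates the lifespan --- which is indeed why the bound is $\gamma$-independent. Where you genuinely diverge is the key nonlinear lower bound. The paper proves \eqref{est:nonlin0} via an Orlicz--H\"older argument: the factorization $\psi\approx\psi^{1-\alpha}(\Y^*)^{-1}(\psi^\alpha)\,\Y^{-1}(\psi^\alpha)$ with $\alpha=2/\gamma$, Luxemburg-norm estimates (Lemma\til\ref{lem:luxest}), and the sub-/super-multiplicativity of $\Y$ (Lemma\til\ref{lem:supersubmolt}). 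You instead apply Jensen's inequality for the convex Young function against the probability measure $\psi\dx/A(t)$ on the light cone, giving
\begin{equation*}
\ints\Y(\rho-1)\,\psi\dx\;\ge\;A(t)\,\Y\!\left(\frac{F(t)}{A(t)}\right),
\qquad
A(t):=\int_{|x|\le1+t}\psi\dx\approx\jap{t}^{\frac{n-1}{2}}.
\end{equation*}
This is correct and more elementary: combined with $\Y(pq)\gtrsim\Y(p)\Y(q)$ for $1<\gamma\le2$ (and the reduction $\Y(p)\gtrsim|p|^2$ for $\gamma\ge2$) it recovers \eqref{est:nonlin0}, and on the quadratic branch it yields directly the needed $\jap{t}^{-\frac{n-1}{2}}F^2$. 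Jensen buys you freedom from the complementary function and the Luxemburg norm; the paper's route buys an estimate already in the fixed form $(1+t)^{-\lambda}N(F)$ that plugs verbatim into its comparison lemma. Three points in your sketch still need repair. First, the identity you actually obtain is $F''+2F'+\frac{\mu}{(1+t)^\lambda}(F'+F)\gtrsim\ints\Y(\rho-1)\psi\dx$ (the $2F'$ comes from $\psi_t=-\psi$); this first-order term is not a ``bounded remainder'' and must be carried through the double time-integration and the comparison lemma. Second, the data assumptions give $F(0)>0$ and $F'(0)+F(0)>0$, not $F'(0)\gtrsim\e$, and the positivity of $F(t)$ for all $t$ --- needed before the Jensen output can drive a blow-up ODE --- requires the separate continuity/comparison argument the paper runs from \eqref{eq:F-4}. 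Third, $F/(1+t)^{\beta_n}$ does not ``remain in the small-value regime'' (it must eventually blow up); the correct assertion is that the lifespan is governed by the small-argument exponent of the Young function, which is exactly the content of the two-branch Li--Zhou variant you invoke but do not prove (the paper supplies it as Lemma\til\ref{lem:lizhou-var}).
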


%%%%

\begin{theorem}\label{thm2}
Let $n\in\{1,2\}$ and $\gamma>1$. Suppose $\lambda=1$ and $0\le\mu\le 3-n$.
Assume that the initial data satisfy \eqref{data} and \eqref{eq:datapositivity}.
Then the system \eqref{main} has no global solutions, and the lifespan estimate satisfies
\begin{equation}\label{eq:thm2}
	T_\e \le
	\left\{
	\begin{aligned}
		&C\e^{-\ltfrac{2}{3-n-\mu}}
		&&\text{if $\mu<3-n$,}
		\\
		&\exp\left(C \e^{-1}\right)
		&&\text{if $\mu=3-n$,}
	\end{aligned}
	\right.
\end{equation}
provided $\e \le \e_0$ for some positive constant $\e_0 \equiv \e_0(n,\mu,\gamma,u_0,\rho_0)$.
\end{theorem}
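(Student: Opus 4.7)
The plan is to parallel the proof of Theorem \ref{thm1}, modifying two ingredients to accommodate the non-integrable scale-invariant damping $\mu/(1+t)$: an algebraic multiplier that converts the damping term into a conservation form, and the same Orlicz-space Jensen inequality used there to control the non-power nonlinearity uniformly in $\gamma > 1$. The shifted exponent $3-n-\mu$ in \eqref{eq:thm2} should emerge as a direct consequence of combining the weight with the volume factor dictated by the finite propagation speed.

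First, I would use the reformulation from Section \ref{sec:reformulation} rewriting \eqref{main} as a second-order wave-type equation for $\sigma := \rho - 1$ of the schematic form
\begin{equation*}
	\sigma_{tt} - \Delta\bigl(p(1+\sigma) - p(1)\bigr) + \frac{\mu}{1+t}\,\sigma_t = \partial_i\partial_j(\rho u^i u^j).
\end{equation*}
The key algebraic identity for the scale-invariant regime is
\begin{equation*}
	(1+t)^\mu\left[\sigma_{tt} + \frac{\mu}{1+t}\sigma_t\right] = \bigl[(1+t)^\mu \sigma_t\bigr]_t,
\end{equation*}
so multiplying the equation by the weighted test function $(1+t)^\mu\phi(t,x)$, with $\phi$ as in \eqref{def:phi}, and integrating by parts over $\R^n$ absorbs the damping into a conservative second-order operator. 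Setting $F(t) := \int_{\R^n} \sigma(t,x)\,\phi(t,x)\,dx$ and using \eqref{eq:datapositivity} to secure the sign of the initial contributions, one obtains an ordinary differential inequality of the type
\begin{equation*}
	\bigl[(1+t)^\mu F'(t)\bigr]' \geq c\,(1+t)^\mu \int_{\R^n} N(\sigma)\,\phi\,dx + (\text{manageable linear remainders}),
\end{equation*}
where $N(\sigma) := p(1+\sigma) - p(1) - p'(1)\sigma$ is the pressure nonlinearity.

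Second, to relate the right-hand side to $F(t)^2$ uniformly in $\gamma > 1$, I would use the paper's Orlicz-space machinery. The function $\Phi(s) := N(s)$ is a Young function, since $\Phi(0) = \Phi'(0) = 0$ and $\Phi''(s) = (\gamma-1)(1+s)^{\gamma-2} > 0$, with $\Phi(s) \sim \tfrac{\gamma-1}{2} s^2$ near $s=0$ and $\Phi(s) \sim s^\gamma/\gamma$ as $s \to \infty$. The finite propagation from \eqref{data} gives $\supp \sigma(t,\cdot) \subseteq B_{1+t}$, so the Jensen inequality in Orlicz form yields
\begin{equation*}
	\int_{B_{1+t}} \Phi(\sigma)\,\phi\,dx \geq \n{\phi(t,\cdot)}_{L^1(B_{1+t})}\;\Phi\!\left(\frac{F(t)}{\n{\phi(t,\cdot)}_{L^1(B_{1+t})}}\right).
\end{equation*}
Using the known asymptotics of $\phi$, the denominator grows polynomially like $(1+t)^{\alpha(n)}$, and combining this with the weight $(1+t)^\mu$ produces, in the regime where $\Phi(s) \gtrsim s^2$ applies, a closed ODI
\begin{equation*}
	\bigl[(1+t)^\mu F'(t)\bigr]' \geq c\,(1+t)^{\mu - \beta(n)}\, F(t)^2.
\end{equation*}

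Finally, feeding this inequality into the Li--Zhou-type lemma from \cite{LiZhou1995} (in the scale-invariant variant employed for Theorem \ref{thm1}), together with a lower bound $F(t) \geq c\e$ on a short initial interval guaranteed by \eqref{eq:datapositivity} and the integration of $(1+t)^\mu F'$ against $(1+t)^{-\mu}\,dt$, yields \eqref{eq:thm2}. The shifted exponent $3-n-\mu$ arises because the integrating factor $(1+t)^{-\mu}$ combines with the volume factor $(1+t)^n$ to replace the effective dimension $n$ of Theorem \ref{thm1} by $n+\mu$; the borderline $\mu = 3-n$ is precisely where the polynomial ODI degenerates into a logarithmic one, producing the exponential lifespan. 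The main obstacle, as in Theorem \ref{thm1}, lies in verifying that $F(t)/\n{\phi}_{L^1}$ remains in the regime where $\Phi(s) \gtrsim s^2$ throughout the relevant time window --- the critical point being the small-$\sigma$ estimate, which is exactly where working with the Young function $\Phi$ rather than a single $L^p$ power lets the conclusion hold for every $\gamma > 1$ rather than only $\gamma = 2$.
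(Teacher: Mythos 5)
Your outline has the right overall shape, and your Jensen-inequality idea for the nonlinear term is a legitimate (and arguably cleaner) alternative to the paper's Orlicz--H\"older computation: applying Jensen with the probability measure $\psi\dx/\n{\psi}_{L^1}$ and then the super-multiplicativity of $\Y$ from Lemma\til\ref{lem:supersubmolt} recovers \eqref{est:nonlin0} for $1<\gamma\le2$, with $\gamma\ge2$ reducing to the quadratic case. (Two caveats: you must use $\psi=e^{-t}\phi$, not $\phi$ itself, since $\n{\phi}_{L^1(|x|\le 1+t)}\approx\jap{t}^{(n-1)/2}e^{t}$ grows exponentially, not polynomially as you claim; and you need the super-multiplicativity step to put the bound in the product form $(1+t)^{-\lambda}N(F)$ that any Li--Zhou-type lemma requires.)

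The genuine gap is in your treatment of the damping, which is exactly the part that is specific to Theorem\til\ref{thm2}. Your multiplier $(1+t)^\mu$ yields an inequality of the form $F''+\tfrac{\mu}{1+t}F'\gtrsim\jap{t}^{-\frac{n-1}{2}}\Y(F)$, equivalently $[(1+t)^\mu F']'\gtrsim(1+t)^{\mu-\frac{n-1}{2}}\Y(F)$, and you propose to ``feed this into the Li--Zhou-type lemma in the scale-invariant variant employed for Theorem\til\ref{thm1}.'' No such variant is employed there: for $\lambda>1$ the multiplier $\m$ is bounded above and below, so it can be divided out and Lemma\til\ref{lem:lizhou-var} (which has coefficient $1$ on $I'$) applies directly; for $\lambda=1$ the multiplier $(1+t)^\mu$ is unbounded and dividing by it destroys the structure, so the lemma does not apply to your inequality. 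The paper's proof instead uses the \emph{half}-power multiplier $\sqrt{\m}=(1+t)^{\mu/2}$ (a Liouville transform): setting $G=(1+t)^{\mu/2}F$ and using $\Y((1+t)^{-\mu/2}G)\gtrsim(1+t)^{-\mu}\Y(G)$ produces the shifted weight $\jap{t}^{-\frac{n+\mu-1}{2}}\Y(G)$ --- this is where $3-n-\mu$ actually comes from --- at the price of a residual mass term $\tfrac{\mu(2-\mu)}{4(1+t)^2}G$, which is then eliminated with two further multipliers $\elle$ and $\p$ before Lemma\til\ref{lem:lizhou-var} can be invoked. Your alternative of double integration against $(1+t)^{-\mu}$ starting from $F\ge c\e$ is a Kato-type iteration, a genuinely different argument whose exponent arithmetic is delicate (the factor $(1+t)^{-\mu}$ cancels against $(1+t)^{\mu}$ in the leading power iteration, and the $\mu$-dependence enters only through the data-driven iterate); it would require its own lemma, which you neither state nor prove. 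As written, the step from your ODI to \eqref{eq:thm2} is missing.
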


%%%

\begin{remark}[\emph{About optimality in the undamped case}]
	\label{rmk:undamped_optimality}
	The results in Theorems \ref{thm1} and\til\ref{thm2} hold also for the compressible Euler system without damping, setting $\mu=0$. Together with the lower bound in \eqref{L3d} by Sideris, and recalling the discussion at the end of Subsection\til\ref{subsec:undamped}, we completely close the problem of the optimality of the lifespan estimates in the undamped case for any $\gamma>1$.
\end{remark}

\begin{remark}[\emph{About optimality in the damped case}]
	\label{rmk:damped_optimality}
	Compared to the result \eqref{L1} obtained in \cite{HouWittYin2018,HouYin2017,Pan2016-JMAA}, we improve the lifespan estimates for $n=2$ and $n=3$, whereas for $n=1$ we re-demonstrate the upper bound in \eqref{eq:1Dlifespan} with a completely different approach.
	Since when $\lambda=1$ it is the size of the positive constant $\mu$ that determines whether there is global solution or blow-up in finite time, it is reasonable to believe that the upper bound of the lifespan depends also on $\mu$, as the results in $1D$ \eqref{eq:1Dlifespan} obtained in \cite{Sugiyama2018} by Sugiyama.
	And precisely in view of these results, it is natural to conjecture that the lifespan estimates in Theorem\til\ref{thm1} and\til\ref{thm2} should be indeed optimal, because we already know they are for $n=1$.
\end{remark}

\begin{remark}[\emph{Independence of $\gamma$}]\label{rmk:gammaindep}
	In light of Remark\til\ref{rmk:undamped_optimality}, we can provide a negative answer to the statements at the end of \cite{Sideris1985} and \cite{Rammaha1989}: in the undamped case, the lifespan estimates are actually independent of $\gamma$, or better, for any $\gamma>1$ they coincide with the ones in the case $\gamma=2$. This should be true also in the damped case, on the basis of Remark\til\ref{rmk:damped_optimality}.
\end{remark}

\begin{remark}[\emph{Relations with the Glassey's conjecture}]
	As we anticipated before, we will reduce ourselves to the study of the damped wave equation \eqref{eq:wave}, and hence, roughly speaking, to the inequality
	\begin{equation}\label{eq:dampedineq}
		\widetilde{\rho}_{tt} - \Delta \widetilde{\rho} + \frac{\mu}{(1+t)^{\lambda}} \widetilde{\rho}_t
		\ge \Delta R(\widetilde{\rho})
	\end{equation}
	where $\lim_{p\to0} \frac{R(p)}{|p|^2} = \frac{\gamma-1}{2}$ and $\lim_{p\to+\infty} \frac{R(p)}{p^\gamma} = \frac1\gamma$. But we can be more precise.
	Indeed, it is interesting to observe that our problem seems to be related to the Glassey's conjecture for the wave equation with derivative-type non-linearity, which affirms that the critical exponent for the equation
	\begin{equation*}
		u_{tt} - \Delta u + \frac{\mu}{1+t} u_t = |u_t|^p
	\end{equation*}
	with small initial data is given by the Glassey exponent
	\begin{equation*}
		p_G(n) := 1 + \frac{2}{n-1}.
	\end{equation*}
	Let us consider the corresponding problem with damping term, namely
	\begin{equation}\label{eq:glassey}
		\left\{
		\begin{aligned}
			& u_{tt} - \Delta u + \frac{\mu}{(1+t)^\lambda} u_t = |u_t|^p,
			\\
			& u(0,x) = \e u_0(x), \quad u_t(0,x) = \e u_1(x),
		\end{aligned}
		\right.
	\end{equation}
	for which the possibly optimal blow-up results in the case $\lambda>1$ and $\lambda=1$ are given in \cite{LaiTakamura2019} and \cite{HamoudaHamza2021} respectively (in the latter reference a more general combined nonlinearity is considered, see also \cite{LucentePalmieri2021,LaiSchiavone2022,ChenLucentePalmieri2021,HamoudaHamza2021-APAM} for related problems). For the model with $\lambda=1$ the critical exponent seems to be $p_G(n+\mu)$, in the sense that we have blow-up for any $1< p \le p_G(n+\mu)$ and global existence for $p>p_G(n+\mu)$. But if we set $p=2$, this requirement is equivalent to $\mu \le 3-n$, which is exactly the condition on $\mu$ in Theorem\til\ref{thm2}.
	Similarly, if $\lambda>1$ the critical exponent should be $p_G(n)$, experiencing blow-up for $1<p \le p_G(n)$. Setting $p=2$ this condition reduce to $n\in\{1,2,3\}$.
	This analogy holds true also for the lifespan estimates: the corresponding ones proved in \cite{LaiTakamura2019} and \cite{HamoudaHamza2021}, setting $p=2$, match with the ones provided in Theorem\til\ref{thm1} and\til\ref{thm2}.
	
	In other words, our problem \eqref{main} seems to share the same blow-up dynamics of the problem \eqref{eq:glassey} with the exponent fixed to be $p=2$. The rising of this particular number is not surprising, in view of Remark\til\ref{rmk:gammaindep} and our particular nonlinearity. Indeed, even if the function $R$ defined in \eqref{def:R} and appearing in \eqref{eq:dampedineq} behaves like $|\!\cdot\!|^2$ for small argument and like $|\!\cdot\!|^\gamma$ for large argument, it will emerge from the proof of the theorems that it is actually the power $|\!\cdot\!|^2$ what essentially influence the behavior of the problem.
\end{remark}

\begin{remark}[\emph{Other problems}]
	\label{rmk:otherproblems}
	Let us list here some other related problems:
	\mynobreakpar
	\begin{itemize}
		\item The method exploited in this work, based on the combination of techniques from the wave equations blow-up theory with Orlicz space tools, could be employed also in the study of blow-up phenomena for wave equation with general non-homogeneous convex non-linearities. For example, the computations we perform can be adapted for an equation like
		\begin{equation*}
			u_{tt} - \Delta u + d(t) u_t + m(t) u = \min\{|u|^p, |u|^q\}
		\end{equation*}
		with some constants $p,q>1$ and damping and mass terms $d(t)$, $m(t)$.
		
		\item Another potential application of our methods is to the problem corresponding to \eqref{main} in exterior domain with appropriate boundary conditions.		
		
		\item For $\gamma=2$ the problem \eqref{main} is closely related to the inviscid shallow-water equations: they indeed coincide if the pressure satisfies the law $p(\rho)=\frac{\rho^{2}}{2F^2}$, being $F$ the Froude number (see e.g. \cite{Bresch2009,Majda1984}).
	\end{itemize}
\end{remark}

\begin{notations}
	In the rest of the paper, we will use $A \lesssim B$ (resp. $A \gtrsim B$) in place of $A \le C B$ (resp. $A \ge C B$), where $C$ is a positive constant independent of $\e$. We will write $A \approx B$ if $A \lesssim B$ and $A \gtrsim B$. Finally, $f(x) \sim g(x)$ for $|x| \to r \in \R\cup\{+\infty\}$ means that $\lim_{|x|\to r} \frac{f(x)}{g(x)}=1$.
\end{notations}

%%%%%%%%%%%%%%%%%%%%%%%%%%%%%%%%%
%%%%%%%%%%%%%%%%%%%%%%%%%%%%%%%%%
%%%%%%%%%%%%%%%%%%%%%%%%%%%%%%%%%

\section{Local existence, finite speed of propagation, reformulation}\label{sec:reformulation}

In this section we are going to recall the local existence and finite speed of propagation property of the solution for \eqref{main}, after reformulating it in a symmetric hyperbolic system. Then, we will deduce the damped wave equation satisfied by the density, the energy and weak formulations of which will be the starting point of our argument.

Denote the sound speed by
\begin{equation*}
	\sigma(\rho)=\sqrt{p'(\rho)}=\rho^{\frac{\gamma-1}{2}}
\end{equation*}
and set $\overline{\sigma}=\sigma(1)=1$, which corresponds to the sound speed at the background density $\overline{\rho}=1$. Moreover, let
\begin{equation*}
	\theta(t,x)
	= \frac{2}{\gamma-1}\left(\sigma(\rho)-1\right)=\frac{2}{\gamma-1}
	\left(\rho^{\frac{\gamma-1}{2}}-1\right).
\end{equation*}
Then the system \eqref{main} can be reformulated as
\begin{equation}\label{rmain}
	\left\{
	\begin{aligned}
		&\partial_t\theta+u\cdot\nabla\theta+\left(1+\frac{\gamma-1}{2}
		\theta\right)\nabla\cdot u=0,
		\\
		&\partial_tu+u\cdot \nabla u+\left(1+\frac{\gamma-1}{2}
		\theta\right)\nabla\theta+\frac{\mu u}{(1+t)^{\lambda}}=0,
		\\
		&\theta(0,x)= \e \theta_{0}(x), \qquad u(0,x)=\varepsilon u_{0}(x)
	\end{aligned}
	\right.
\end{equation}
where
\begin{equation*}
	\theta_0(x)
	=
	\frac{1}{\e} \cdot \frac{2}{\gamma-1}\left(\sigma(1+\e\rho_0)-1\right)
\end{equation*}
satisfies
\begin{equation*}
	\theta_0 \sim \rho_0 \quad\text{for $\e\to0^+$,}
	\qquad
	\supp\theta_0 \subseteq \left\{x \in \R^n \colon |x|\le 1\right\}.
\end{equation*}
It is easy to see that \eqref{rmain} is a symmetric hyperbolic system of the form
\begin{equation*}%\label{sh}
		V_t+\sum_{i=1}^{n}a_i(V)V_{x_i}=f(t,V)
\end{equation*}
with $V=(\theta, u)^T$. According to the general theory in \cite{Kato1975, Majda1984}, the above hyperbolic system of conservation laws admits a local $C^1$-solution on the time interval $[0, T)$ for some $T>0$, provided the initial data are sufficiently regular. It holds also that
\begin{equation*}
	\rho(t, x)>0
	\qquad
	\text{for $(t, x)\in [0, T)\times \R^n$,}
\end{equation*}
if the initial density satisfies $\rho(0, x)>0$ (which in our case is surely true for $\e$ small enough). Moreover, it can be proved that for any local $C^1$-solution $(\rho, u)$ of the system \eqref{main}, with initial data satisfying \eqref{data}, the following finite speed of propagation result holds:
\begin{equation}\label{eq:supp_rho}
	\supp(\rho-1), \,\, \supp u \subseteq \{ (t,x) \in (0,T) \times \R^n \colon |x| \le 1+t \}.
\end{equation}
This can be showed by exploiting a parallel method as in the proof of Lemma\til3.2 in \cite{SiderisThomasesWang2003}. We omit the details here, and we refer to \cite{Sideris1985,Sideris1991,SiderisThomasesWang2003} for a more exhaustive discussion.

%%%%%%%%%%%%%%%%%%%%%%%%%%%%%%%%%%%%%%%%%%%%%%%

With the local existence and the finite speed of propagation in hands, we can proceed reformulating our equations in a integral form, multiplying by a test function.

\subsection{The damped wave equation}

Suppose $(\rho,u)$ is a $C^1$-solution to \eqref{main}, satisfying \eqref{eq:supp_rho}.
Let $\Phi(t,x)$ be a real smooth function with compact support on $[0,T)\times\R^n$.
Multiplying equation \eqref{eq:main-2} by $\nabla\Phi$, integrating on the strip $[0,t) \times \R^n$ of the space-time, with $t\in[0,T)$, and then by parts with respect to the time,
we reach
\begin{equation}\label{eq:energy2}
	\begin{split}
		&\ints \rho u \cdot \nabla \Phi \dx
		-
		\int_0^t \ints
		\rho u \cdot \nabla\Phi_s
		%\dx\ds
		-
		%\int_0^t \ints
		d(s) \rho u \cdot\nabla\Phi
		\dx\ds
		\\
		=&\,
		\e \ints (1+\e\rho_0) u_0 \cdot\nabla\Phi(0,x) \dx
		\\
		&-
		\int_0^t \ints
		\left[ \nabla \cdot (\rho u \otimes u) +
		\nabla\frac{\rho^\gamma-1}{\gamma} \right]
		\cdot \nabla\Phi
		\dx\ds.
	\end{split}
\end{equation}
where for short we set
\begin{equation*}
	d(s) := \frac{\mu}{(1+s)^\lambda} .
\end{equation*}
Consider now equation \eqref{eq:main-1}. After a multiplication by $\Phi_s(s,x)$, integrating on $[0,t)\times\R^n$ and then by parts with respect to the space, we have
\begin{equation}\label{eq:energy1}
	\int_0^t \ints
	\rho u \cdot \nabla \Phi_s \dx\ds
	=
	\int_0^t \ints (\rho-1)_s \Phi_s \dx\ds.
\end{equation}
Multiplying \eqref{eq:main-1} by $\Phi$ instead, integrating only on the space and then by parts, we obtain that
\begin{equation}\label{eq:rho-u-nablaphi}
	\ints \rho u \cdot \nabla \Phi \dx = \ints (\rho-1)_t \Phi \dx .
\end{equation}
Finally, multiplying \eqref{eq:main-1} by $d(s)\Phi(s,x)$, integrating on the space-time and then by parts, we get
\begin{equation}\label{eq:damping_part}
	\begin{split}
		\int_0^t \ints
		d(s) \rho u \cdot \nabla\Phi \dx\ds
		=
		\int_0^t \ints
		d(s) (\rho-1)_s \Phi
		\dx\ds
		.
	\end{split}
\end{equation}
Now, inserting \eqref{eq:energy1}, \eqref{eq:rho-u-nablaphi} and \eqref{eq:damping_part} into \eqref{eq:energy2}, we obtain
\begin{equation}\label{eq:wave-energy}
	\begin{split}
		& \ints (\rho-1)_t \Phi \dx
		- \int_0^t \ints (\rho-1)_s \Phi_s \dx\ds
		\\
		&+ \int_0^t \ints \nabla(\rho-1) \cdot \nabla\Phi \dx\ds
		+ \int_0^t \ints d(s) (\rho-1)_s \Phi \dx\ds
		\\
		=&\,
		-
		\e \ints
		\nabla\cdot ((1+\e\rho_0) u_0)
		%\left[(\rho-1)_t\right]_{t=0}
		\Phi(0,x) \dx
		\\
		&
		- \int_0^t \ints \left[\nabla \cdot (\rho u \otimes u)\right] \cdot \nabla\Phi \dx\ds
		- \int_0^t \ints \nabla R(\rho-1) \cdot \nabla \Phi \dx\ds
	\end{split}
\end{equation}
where $R \colon [-1, +\infty) \to [0, +\infty)$ is defined by
\begin{equation}\label{def:R}
	R(p) := \frac{(p+1)^\gamma-1}{\gamma} - p.
\end{equation}
Notice that equation \eqref{eq:wave-energy} can be regarded as the energy solution formulation of the nonlinear damped wave equation
\begin{equation}\label{eq:wave}
	(\rho-1)_{tt} - \Delta(\rho-1) + d(t) (\rho-1)_t = \nabla\cdot\left[\nabla\cdot(\rho u\otimes u)\right] + \Delta R(\rho-1)
\end{equation}
with initial data
\begin{align*}
	[\rho-1]_{t=0} &= \e \rho_0 ,
	\\
	[(\rho-1)_t]_{t=0} &= - \e \nabla\cdot ((1+\e\rho_0) u_0) .
\end{align*}
Finally, we will need also the weak solution formulation of \eqref{eq:wave}. Applying the integrations by parts several times in \eqref{eq:wave-energy} and letting $t \to T$, we obtain
\begin{equation}\label{eq:wave-weak}
	\begin{split}
		&\int_0^T \ints (\rho-1)
		\left[ \Phi_{tt} - \Delta\Phi - d(t) \Phi_t - d_t(t) \Phi
		\right]
		\dx\dt
		\\
		=&\,
		\e \ints
		\left[
		d(0) \rho_0 \Phi(0,x) + (1+\e\rho_0) u_0 \cdot \nabla\Phi(0,x) - \rho_0 \Phi_t(0,x)
		\right]
		\dx
		\\
		& +
		\int_0^T \ints
		\tr[(\rho u \otimes u) \nabla^2\Phi]
		\dx\dt
		+
		\int_0^T \ints
		R(\rho-1) \Delta\Phi
		\dx\dt
	\end{split}
\end{equation}
where $\nabla^2\Phi$ is the Hessian matrix of $\Phi$.

\section{Some pills of Orlicz spaces theory}

Here we briefly recap some tools from the Orlicz space theory, together with some properties we will largely employ across our work. As references on the Orlicz spaces, see for example the classic books \cite{KrasnoselskiiRutickii1961} and \cite{RaoRen1991}.

Let us consider a continuous, even, convex function $\Y \colon \R \to [0,+\infty)$ satisfying the conditions
\begin{equation*}
	\lim_{p\to0} \frac{\Y(p)}{p} = 0,
	\qquad
	\lim_{|p|\to +\infty} \frac{\Y(p)}{|p|} = +\infty.
\end{equation*}
By definition (see \cite[\S I.1.5]{KrasnoselskiiRutickii1961}), $\Y$ is a $N$-function. The $N$-function complementary to $\Y$, denoted by $\Y^*$, is obtained by taking the Legendre transform of $\Y$ (\cite[Equation (2.9)]{KrasnoselskiiRutickii1961}), namely
\begin{align}\label{def:complY}
	\Y^*(q)
	:= \sup_{p \ge 0} \, \{ p|q| - \Y(p) \}.
\end{align}
The following useful inequalities (\cite[Equation (2.10)]{KrasnoselskiiRutickii1961}) hold for any $p\ge0$:
\begin{equation}\label{eq:2.10}
	p \le \Y^{-1}(p) \cdot (\Y^*)^{-1}(p) \le 2p .
\end{equation}

We are in the position now to define the Orlicz space associated to $\Y$. Let us denote with $\widetilde{L^\Y}(\R^n)$ the Orlicz class of functions $u \colon \R^n \to \R$ for which
\begin{equation*}
	\rho(u;\Y) := \int_{\R^n} \Y(u(x)) \dx < \infty.
\end{equation*}
Then, we denote by $L^\Y(\R^n)$ the set of all functions $u$ satisfying the condition
\begin{equation*}
	\int_{\R^n} u(x) v(x) \dx < \infty
\end{equation*}
for all $v \in \widetilde{L^{\Y^*}}(\R^n)$. The set $L^\Y(\R^n)$ is a complete normed linear space, called Orlicz space, when equipped with the Orlicz norm
\begin{equation*}
	\n{u}_{(L^\Y)} := \sup_{\rho(v;\Y^*) \le 1}
	\left| \int_{\R^n} u(x) v(x) \dx \right|
	.
\end{equation*}
	The set $L^\Y(\R^n)$ can also be transformed into a Banach space by using the Luxemburg norm, defined as
	\footnote{A small remark on the notation: compared with the monography \cite{KrasnoselskiiRutickii1961} by Krasnosel'skii and Rutickii, for cosmetic reasons we reverse the symbols used for the Orlicz and Luxemburg norms, since we will employ only the latter norm in our computations.}
	\begin{equation*}
		\n{u}_{L^\Y} :=
		\inf
		\left\{
		k > 0
		\colon
		\int_{\R^n} \Y \left(\frac{u(x)}{k}\right) \dx \le 1
		\right\},
	\end{equation*}
	which is equivalent to the Orlicz norm, since (\cite[Equation (9.24)]{KrasnoselskiiRutickii1961})
	\begin{equation}\label{eq:equivorlux}
		\n{u}_{L^\Y} \le \n{u}_{(L^\Y)} \le 2 \n{u}_{L^\Y}.
	\end{equation}
Last but least, a key tool we need from the Orlicz spaces theory is the following H\"older inequality.
\begin{lemma}[{\cite[Theorem\til9.3]{KrasnoselskiiRutickii1961}}]
	\label{lem:holder}
	The inequality
	\begin{equation*}
		\left|
		\int_{\R^n} u(x) v(x) \dx
		\right|
		\le \n{u}_{(L^\Y)} \n{v}_{(L^{\Y^*})}
	\end{equation*}
	holds for any pair of functions $u \in L^\Y(\R^n)$, $v \in L^{\Y^*}(\R^n)$.
\end{lemma}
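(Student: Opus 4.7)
The plan is to combine the very definition of the Orlicz norm with the equivalence between the Orlicz and Luxemburg norms stated in \eqref{eq:equivorlux}. The key observation is that, despite the asymmetric appearance of the definition of $\n{\cdot}_{(L^\Y)}$, the admissible test functions in the supremum can be characterized directly in terms of a norm of $v$, namely the Luxemburg norm, so that the bilinear form $(u,v)\mapsto \int uv$ can be estimated by an Orlicz norm on one side and a Luxemburg norm on the other, and only at the last step one trades Luxemburg for Orlicz.

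First I would dispose of the trivial cases in which $u$ or $v$ vanishes almost everywhere, and assume $\n{v}_{L^{\Y^*}}>0$. For any $k>\n{v}_{L^{\Y^*}}$, the definition of the Luxemburg norm as an infimum, together with the monotonicity of $\Y^*$ on $[0,+\infty)$ (recall that $\Y^*$ is even, convex, with $\Y^*(0)=0$), gives
$$\rho(v/k;\Y^*)=\int_{\R^n}\Y^*\!\left(\frac{v(x)}{k}\right)\dx\le 1.$$
Plugging $v/k$ as test function into the definition of the Orlicz norm of $u$ yields $\left|\int u(v/k)\dx\right|\le \n{u}_{(L^\Y)}$, and hence
$$\left|\int_{\R^n} uv \dx\right|\le k\,\n{u}_{(L^\Y)}.$$
Letting $k\to \n{v}_{L^{\Y^*}}^{+}$ and invoking the left half of \eqref{eq:equivorlux}, namely $\n{v}_{L^{\Y^*}}\le \n{v}_{(L^{\Y^*})}$, closes the argument.

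The only mild technical point is the verification that the rescaled function $v/k$ does belong to the admissible class $\{w:\rho(w;\Y^*)\le 1\}$ for $k$ slightly larger than $\n{v}_{L^{\Y^*}}$: this relies only on the evenness and monotonicity of $\Y^*$ on the positive half-line, but it has to be handled with a little care since the infimum in the Luxemburg norm is not always attained, so one has to approximate it strictly from above.

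As an alternative and more conceptual route, one may exploit directly Young's inequality $pq\le \Y(p)+\Y^*(q)$, which is an immediate consequence of the Legendre-transform definition \eqref{def:complY}. Applied pointwise to the ratios $|u(x)|/\n{u}_{L^\Y}$ and $|v(x)|/\n{v}_{L^{\Y^*}}$, and then integrated, it produces the variant bound $|\int uv\dx|\le 2\,\n{u}_{L^\Y}\n{v}_{L^{\Y^*}}$. This approach, however, carries an extraneous factor $2$ that cannot be absorbed merely by switching to the equivalent Orlicz norms via \eqref{eq:equivorlux}, and so the first route is preferable as it directly yields the sharp constant $1$ appearing in the statement.
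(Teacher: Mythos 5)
Your argument is correct. Note that the paper offers no proof of this lemma at all: it is quoted verbatim from Krasnosel'skii--Rutickii \cite[Theorem~9.3]{KrasnoselskiiRutickii1961}, so there is nothing internal to compare against. What you give is essentially the standard textbook argument, and it in fact establishes the slightly stronger mixed-norm form $\left|\int uv\dx\right|\le \n{u}_{(L^\Y)}\n{v}_{L^{\Y^*}}$ before weakening via the left half of \eqref{eq:equivorlux}; the one technical point --- that $v/k$ lies in the admissible class $\{w:\rho(w;\Y^*)\le 1\}$ for every $k>\n{v}_{L^{\Y^*}}$, because that class of scaling parameters is upward closed by the monotonicity of $\Y^*$ on $[0,+\infty)$ --- is handled correctly. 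Your remark on the Young-inequality route and its unavoidable factor $2$ is also accurate.
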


The above is true for any $N$-function, but it is time to specialize the definition of $\Y$ to adapt it to our purposes.
For us, the role of $\Y \colon \R \to [0,+\infty)$ will be held by
\begin{equation}\label{def:Y}
	\Y(p) := \frac{(|p|+1)^\gamma-1}{\gamma} - |p|,
\end{equation}
whose complementary function, as one can check from formula \eqref{def:complY}, is
\begin{equation*}
	\Y^*(q) = \frac{(|q|+1)^{\gamma'}-1}{\gamma'} - |q|,
\end{equation*}
where $\gamma' = \frac{\gamma}{\gamma-1}$ is the conjugate exponent of $\gamma$, i.e. $\frac{1}{\gamma}+\frac{1}{\gamma'}=1$.

Note that, in the simplest case $\gamma=2$, we have $\Y(p) = \frac{|p|^2}{2}$. In the general case $\gamma>1$, it is clearly seen that
$\Y(p) \sim \frac{|p|^\gamma}{\gamma}$
when $|p| \to +\infty$.
On the other side, when $p \to 0$, by Taylor's theorem it holds
\begin{equation*}
	\Y(p) = \frac{\gamma-1}{2} |p|^2 + o(|p|^2).
\end{equation*}
Therefore we can deduce
\begin{equation}\label{eq:approxY}
	\Y(p)
	\approx
	\left\{
	\begin{aligned}
		& |p|^2 &&\text{if $|p| \le 1$,}
		\\
		& |p|^\gamma &&\text{if $|p| > 1$,}
	\end{aligned}
	\right.
\end{equation}
where the implicit constants in the symbol \lq\lq$\approx$'' depend only on $\gamma$. This relation is helpful in order to highlight the asymptotic behavior of $\Y$, and we will repeatedly use it to approximate the value of $\Y$.

We need to introduce also the following function $\X \colon \R \to [0,+\infty)$, which roughly speaking inverts the asymptotic behaviors of $\Y(p)$ near $p=0$ and near $p=\infty$:
\begin{equation}\label{def:X}
	\X(p)
	:=
	\left\{
	\begin{aligned}
		& \frac{1}{\Y(1/p)}
		&&\text{if $p \neq 0$,}
		\\
		& 0
		&&\text{if $p=0$,}
	\end{aligned}
	\right.
	\approx
	\begin{cases}
		|p|^\gamma &\text{if $|p| \le 1$,}
		\\
		|p|^2 &\text{if $|p| > 1$.}
	\end{cases}
\end{equation}

It is also useful to observe that, for $1<\gamma\le2$, $\Y$ is super-multiplicative (apart from a multiplicative constant), while on the contrary it is sub-multiplicative (again apart from a multiplicative constant) when $\gamma \ge 2$.
\begin{lemma}\label{lem:supersubmolt}
	Consider $\Y$ defined in \eqref{def:Y}.
	If $1 < \gamma \le 2$, then
	\begin{equation}\label{eq:superm}
		\Y(pq) \gtrsim \Y(p) \, \Y(q)
	\end{equation}
	for any $p,q \in \R$, from which in particular
	\begin{equation}\label{eq:subm}
		\Y(pq) \lesssim \Y(p) \, \X(q),
		\qquad
		\X(pq) \lesssim \X(p) \, \X(q),
	\end{equation}
	for any $p,q \in \R$.
	
	If $\gamma \ge 2$, then
	\begin{equation}\label{eq:subm2}
		\Y(pq) \lesssim \Y(p) \, \Y(q)
	\end{equation}
	for any $p,q \in \R$, from which in particular
	\begin{equation*}\label{eq:superm2}
		\Y(pq) \gtrsim \Y(p) \, \X(q),
		\qquad
		\X(pq) \gtrsim \X(p) \, \X(q),
	\end{equation*}
	for any $p,q \in \R$.
	
	All the implicit constants depend only on $\gamma$.
\end{lemma}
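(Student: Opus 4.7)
The strategy is to exploit the two-regime asymptotic formula \eqref{eq:approxY} directly, reducing the verification of the multiplicative estimates for $\Y$ to a finite case analysis, and then to deduce the $\X$-statements via the defining identity $\X(r) = 1/\Y(1/r)$ by substitution.

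By evenness of $\Y$ it suffices to treat $p, q \ge 0$; the degenerate case $p = 0$ or $q = 0$ is trivial since $\Y(0) = \X(0) = 0$. Recalling that $\Y(r) \approx r^2$ when $r \in (0,1]$ and $\Y(r) \approx r^\gamma$ when $r > 1$, with constants depending only on $\gamma$, I split the verification of \eqref{eq:superm} (for $1 < \gamma \le 2$) and \eqref{eq:subm2} (for $\gamma \ge 2$) into regimes. The two diagonal cases $0 < p,q \le 1$ and $p,q > 1$ are immediate: both sides are comparable to $p^2 q^2$ and $p^\gamma q^\gamma$, respectively. The essential case is the mixed one, say $p \le 1 < q$ (the other following by symmetry): here $\Y(p)\Y(q) \approx p^2 q^\gamma$, while $\Y(pq) \approx (pq)^2 = p^2 q^2$ if $pq \le 1$ and $\Y(pq) \approx (pq)^\gamma = p^\gamma q^\gamma$ if $pq > 1$. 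In either subcase the comparison reduces to checking whether $q^{2-\gamma}$ or $p^{\gamma-2}$ lies above or below $1$: when $\gamma \le 2$, both exponents are nonnegative, so $q > 1$ and $p \le 1$ give $q^{2-\gamma} \ge 1$ and $p^{\gamma-2} \ge 1$, producing \eqref{eq:superm}; when $\gamma \ge 2$, the inequalities reverse and we obtain \eqref{eq:subm2}.

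The corollaries involving $\X$ then follow purely formally. Assuming $1 < \gamma \le 2$ and applying \eqref{eq:superm} to the pair $(pq, 1/q)$ gives
\begin{equation*}
	\Y(p) = \Y\bigl((pq)(1/q)\bigr) \gtrsim \Y(pq) \, \Y(1/q) = \frac{\Y(pq)}{\X(q)},
\end{equation*}
which rearranges to the first inequality in \eqref{eq:subm}. Analogously, using $\X(pq) = 1/\Y((1/p)(1/q))$ and applying \eqref{eq:superm} to $(1/p, 1/q)$ yields, after inversion, $\X(pq) \lesssim \X(p)\,\X(q)$. The companion inequalities for $\gamma \ge 2$ are obtained by the same substitutions starting from \eqref{eq:subm2}, with all directions reversed.

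I do not expect any genuine obstacle: the proof is essentially bookkeeping across a finite number of subcases, with the crucial observation that the sign of $\gamma - 2$ — which governs the ordering of the two asymptotic powers in \eqref{eq:approxY} — is precisely what selects the direction of the multiplicative inequality. All implicit constants depend only on $\gamma$, in accordance with the statement.
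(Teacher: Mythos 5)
Your proof is correct and follows essentially the same route as the paper's: a finite case analysis on the regimes of \eqref{eq:approxY} for the core inequality, followed by the formal substitutions $(p,q)\mapsto(pq,1/q)$ and $(p,q)\mapsto(1/p,1/q)$ to extract the $\X$-statements. One cosmetic slip: in the mixed case the two exponents $2-\gamma$ and $\gamma-2$ have opposite signs (not ``both nonnegative''), but since their bases $q>1$ and $p\le1$ lie on opposite sides of $1$ as well, the conclusions $q^{2-\gamma}\ge1$ and $p^{\gamma-2}\ge1$ you draw are exactly right.
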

\begin{proof}
	The proof is na\"if: we just need to use \eqref{eq:approxY}. Without loss of generality we can assume $|p|\le|q|$. Firstly suppose $1<\gamma\le2$. There are four cases:
	\begin{enumerate}[label=(\roman*)]
		\item if $|p|\le |q| \le1$, then in particular $|pq|\le1$, thus $$\Y(p)\Y(q) \approx |p|^2 |q|^2 \approx \Y(pq);$$
		
		\item if $|p|\le 1 < |q|$ and $|pq|\le1$, then $$\Y(p)\Y(q) \approx |p|^2 |q|^\gamma \approx \Y(pq) |q|^{-(2-\gamma)} \le \Y(pq);$$
		
		\item if $|p|\le 1 < |q|$ and $|pq|>1$, then $$\Y(p)\Y(q) \approx |p|^2 |q|^\gamma \approx \Y(pq) |p|^{2-\gamma} \le \Y(pq);$$		
		
		\item if $1<|p|\le |q|$, then in particular $|pq|>1$, thus
		$$\Y(p)\Y(q) \approx |p|^\gamma |q|^\gamma \approx \Y(pq).$$
	\end{enumerate}
	The relation on the right of \eqref{eq:subm} follows directly from \eqref{eq:superm} and the definition \eqref{def:X} of $\X$, whereas the relation on the left follows by substituting $p$ and $q$ with $pq$ and $1/q$ respectively in \eqref{eq:superm}. The case $\gamma>2$ is completely analogous.
\end{proof}

\begin{remark}
	Note that the super-/sub-multiplicative relations above in general can not be inverted if $\gamma\neq2$, in the sense that \eqref{eq:subm2} does not hold if $1<\gamma<2$, whereas \eqref{eq:superm} does not hold if $\gamma>2$. In the special case $\gamma=2$ instead, we have $\Y(pq)=2\Y(p)\Y(q)$ for any $p,q\in\R$.
\end{remark}

An immediate implication of Lemma\til\ref{lem:supersubmolt} is that, if $p \lesssim q$, then $\Y(p) \lesssim \Y(q)$ and $\X(p) \lesssim \X(q)$.
Another consequence is the following observation (which we upgrade at the status of lemma for convenience).
\begin{lemma}\label{lem:luxest}
	Consider $\Y$ defined in \eqref{def:Y} and let $u\in L^{\Y}(\R^n)$. If
	\begin{equation*}
		\ints \Y\left(\frac{u(x)}{k}\right) \dx \le \kappa_0
	\end{equation*}
	for some $k>0$ and $\kappa_0>0$, then
	\begin{equation*}
		\n{u}_{L^{\Y}} \le c_{\gamma, \kappa_0} k,
	\end{equation*}
	where $c_{\gamma, \kappa_0}$ is a positive constant depending only on $\gamma$ and $\kappa_0$.
\end{lemma}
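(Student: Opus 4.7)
The plan is to leverage the convexity of $\Y$ to rescale the given integral condition into the defining one for the Luxemburg norm. By the definition of $\n{\cdot}_{L^\Y}$, it suffices to produce some $c>0$ (depending only on $\gamma$ and $\kappa_0$) such that
\[
\ints \Y\!\left(\frac{u(x)}{ck}\right)\dx \le 1,
\]
because this would immediately yield $\n{u}_{L^\Y}\le ck$.

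The key observation is that, since $\Y$ is convex with $\Y(0)=0$, for every $\lambda\in(0,1]$ and every $p\in\R$ one has
\[
\Y(\lambda p) = \Y\!\left(\lambda p + (1-\lambda)\cdot 0\right) \le \lambda\,\Y(p).
\]
I would then split on the size of $\kappa_0$. If $\kappa_0\le 1$, the hypothesis is already the defining inequality for the Luxemburg norm and gives $\n{u}_{L^\Y}\le k$. Otherwise, set $c:=\kappa_0>1$ and apply the convexity estimate above pointwise with $\lambda=1/c$, then integrate:
\[
\ints \Y\!\left(\frac{u(x)}{ck}\right)\dx
\le \frac{1}{c}\ints \Y\!\left(\frac{u(x)}{k}\right)\dx
\le \frac{\kappa_0}{c} = 1.
\]
Combining both cases, the choice $c_{\gamma,\kappa_0}:=\max\{1,\kappa_0\}$ does the job.

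There is no real obstacle here: the argument is a classical consequence of convexity and does not genuinely invoke Lemma\til\ref{lem:supersubmolt} nor any feature of $\Y$ beyond being an $N$-function, so the dependence on $\gamma$ advertised in the statement is only formal. An alternative $\gamma$-dependent route, specific to $\gamma\ge 2$, would be to use the sub-multiplicative estimate $\Y(pq)\lesssim \Y(p)\Y(q)$ with $q=1/c$ and pick $c$ large enough that the resulting $\Y(1/c)\,\kappa_0$ falls below the implicit constant; however, this fails for $1<\gamma<2$, so the convexity route is both simpler and uniform in $\gamma$.
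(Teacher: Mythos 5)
Your proof is correct, and it takes a genuinely different and more elementary route than the paper's. The paper deduces the lemma from the sub-multiplicativity relations of Lemma~\ref{lem:supersubmolt}: for $1<\gamma\le2$ it uses $\Y(pq)\le d_\gamma\,\Y(p)\,\X(q)$ with $q=1/c_{\gamma,\kappa_0}$ and the explicit choice $c_{\gamma,\kappa_0}=[\X^{-1}(1/(d_\gamma\kappa_0))]^{-1}$, and for $\gamma\ge2$ the analogous relation $\Y(pq)\le\widetilde{d_\gamma}\,\Y(p)\,\Y(q)$; so your remark that the $\Y(pq)\lesssim\Y(p)\Y(q)$ route ``fails for $1<\gamma<2$'' slightly misses that the paper circumvents this precisely by switching to the companion function $\X$ in that range. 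Your argument instead uses only that $\Y$ is convex with $\Y(0)=0$, hence $\Y(\lambda p)\le\lambda\Y(p)$ for $\lambda\in(0,1]$, which yields the constant $\max\{1,\kappa_0\}$ uniformly in $\gamma$ --- this is simpler, avoids the case split on $\gamma$, and holds verbatim for any $N$-function, confirming that the $\gamma$-dependence advertised in the statement is indeed only formal. What the paper's approach buys is essentially nothing extra here beyond consistency with the toolbox (Lemma~\ref{lem:supersubmolt}) it uses throughout Section~\ref{subsec:estnonlin}; your route is a clean improvement for this particular lemma.
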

\begin{proof}
	From Lemma\til\ref{lem:supersubmolt} there exists some positive constant $d_\gamma$, depending only on $\gamma$, such that $\Y(pq) \le d_\gamma \Y(p) \, \X(q)$ if $1<\gamma\le2$. Set
	\begin{equation*}
		c_{\gamma, \kappa_0} := \left[\X^{-1}\left(\frac{1}{d_\gamma \kappa_0}\right)\right]^{-1}.
	\end{equation*}
	Then
	\begin{equation*}
		%\begin{aligned}
		1
		\ge
		\ints
		\Y\left(\frac{u}{k}\right)
		\frac{1}{\kappa_0}
		\dx
		=
		d_\gamma
		\ints \Y\left(\frac{u}{k}\right) \X\left(\frac{1}{c_{\gamma, \kappa_0}}\right)
		\dx
		\ge
		\ints
		\Y\left(\frac{u}{c_{\gamma, \kappa_0} k}\right)
		\dx
			%\end{aligned}
	\end{equation*}
	and hence, by definition of Luxemburg norm, we have that $c_{\gamma,\kappa_0} k$ is an upper bound for $\n{u}_{L^{\Y}}$. Employing instead $\Y(pq) \le \widetilde{d_\gamma} \Y(p) \, \Y(q)$, one can prove the case $\gamma\ge2$ in the same way.
\end{proof}

\begin{remark}
	Here and in the following, when we write $\Y^{-1}$, $(\Y^*)^{-1}$ or $\X^{-1}$, clearly we mean the inverse function of $\Y$, $\Y^*$ or $\X$ respectively, when restricted on the non-negative interval $[0,+\infty)$.
\end{remark}

We are now ready to prove our theorems.

%%%%%%%%%%%%%%%%%%%%%%%%%%%%%%%%%%%%%%%%%

\section{Proof of Theorem\til\ref{thm1}}\label{sec:proof1}

\subsection{The test function}%Attack of the test function

Let us define the positive function
\begin{equation}\label{def:phi}
	\phi(x) :=
	\left\{
	\begin{aligned}
		&
		\frac{1}{n\omega_n}
		\int_{\mathbb{S}^{n-1}} e^{x\cdot\sigma} \text{d}\sigma&\text{if $n \ge2$},
		\\
		& \frac{e^x+e^{-x}}{2} &\text{if $n = 1$},
	\end{aligned}
	\right.
\end{equation}
where $\omega_n := \frac{\pi^{n/2}}{\Gamma\left(\frac{n}{2}+1\right)}$ is the volume of the $n$-dimensional unit ball, so that $n\omega_n$ is the volume of the unit $(n-1)$-sphere $\mathbb{S}^{n-1}$. This function was firstly introduced to prove the blow-up of wave equations by Yordanov and Zhang in \cite{YordanovZhang2006}.
The function $\phi$ is radial, strictly increasing with respect to $|x|$, solves $\Delta\phi=\phi$, and moreover
	\begin{equation*}
		%\begin{aligned}
			\phi(0)=1,
			\qquad
			\phi(x) \sim
			\frac{(2\pi)^{\frac{n-1}{2}}}{n \omega_n}
			|x|^{-\frac{n-1}{2}} e^{|x|}
			\quad\text{when $|x|\to+\infty$,}
		%\end{aligned}
	\end{equation*}
therefore $\phi$ satisfies, for any $x\in\R^n$, the relation
\begin{equation}\label{testpro}
		\phi(x) \approx \jap{x}^{-\frac{n-1}{2}} e^{|x|}.
\end{equation}
Here and in the following $\jap{\cdot}:=(1+|\!\cdot\!|^2)^{1/2}$ stands as customary for the Japanese bracket notation.

All the above properties of $\phi$ can be easily deduced rewriting this function in a closed form. Indeed, using $n$-dimensional spherical coordinates and choosing the polar axis parallel to $x$, one can prove that actually
\begin{equation*}
	\phi(x) =
	\frac{(2\pi)^{\frac{n}{2}}}{n \omega_n} |x|^{1-\frac{n}{2}} I_{\frac{n}{2}-1}(|x|),
\end{equation*}
where $I_\nu(z)$ is the modified Bessel function of the first kind.\footnote{Curiously, as far as we know, this closed expression seems to be never reported in the related literature.}
From the properties of the latter (see e.g. \cite[Sections\til9.6 and 9.7]{AbramowitzStegun1964}) follow those of\til$\phi$.

Let us consider then the positive function
\begin{equation*}%\label{def:psi}
	\psi(t,x) := e^{-t} \phi(x),
\end{equation*}
which satisfies $\psi_t=-\psi$, $\Delta \psi = \psi$,
and the following bounds we will use later:
\begin{equation}\label{est:psi}
	\psi(t,x) \lesssim \jap{t}^{-\frac{n-1}{2}}
\end{equation}
for $|x| \le 1+t$, and
\begin{equation}\label{est:intpsi}
	\int_{|x| \le 1+t} \psi^b(x) \dx \approx \jap{t}^{\frac{n-1}{2}(2-b)}
\end{equation}
for any $b \in [0,2]$ and $t\ge0$.
The estimate \eqref{est:psi} comes directly from \eqref{testpro}.
%noting that $\jap{r}^{-\frac{n-1}{2}} e^{r}$ is increasing for $n\le3$ and $r\ge0$ (but it actually holds for any $n\ge1$).
For \eqref{est:intpsi}, observe that from one side we have
\begin{equation*}
	\begin{split}
		\int_{|x|\le1+t} \psi^b(x) \dx
		&\gtrsim
		\int_{|x| \le 1+t} \jap{x}^{-\frac{n-1}{2}b} e^{b|x|-bt} \dx
		\\
		&\gtrsim
		\int_{\frac{1+t}{2}}^{1+t} \jap{r}^{\frac{n-1}{2}(2-b)} e^{br-bt} \dr
		\\
		&\gtrsim
		\jap{t}^{\frac{n-1}{2}(2-b)},
	\end{split}
\end{equation*}
while on the other side
\begin{equation*}
	\begin{split}
		\int_{|x|\le1+t} \psi^b(x) \dx
		&\lesssim
		\int_{|x| \le 1+t}
		\jap{x}^{-\frac{n-1}{2}b} e^{b|x|-bt} \dx
		\\
		&\lesssim
		\int_{0}^{1+t} \jap{r}^{\frac{n-1}{2}(2-b)} e^{br-bt} \dr
		\\
		&\lesssim
		\jap{t}^{\frac{n-1}{2}(2-b)}.
	\end{split}
\end{equation*}

Now, let $\chi$ a smooth compactly supported cut-off function such that $\chi(t,x)\equiv1$ on the support of $\rho-1$. Choosing $\Phi=\psi\chi$ as test function in \eqref{eq:wave-energy}, using \eqref{eq:supp_rho}, integrating by parts with respect to the space variables and deriving with respect to the time, we arrive at
\begin{equation}\label{eq:F-1}
	\begin{multlined}
		F'' + 2F' + \frac{\mu}{(1+t)^{\lambda}} (F'+F)
		\\=
		\ints
		\tr\left[(\rho u \otimes u) \nabla^2\psi \right]
		\dx
		+
		\ints
		R(\rho-1) \psi
		\dx,
	\end{multlined}
\end{equation}
where we define the functional $F \equiv F(t)$ as
\begin{equation*}
	F(t) := \ints (\rho-1) \psi \dx.
\end{equation*}
Observe that from the conditions \eqref{eq:datapositivity} on the initial data we have
\begin{align}\label{est:datapos}
	\begin{split}
		F(0)
		&= \e \ints \rho_0 \phi \dx > 0,
		\\
		F'(0) + F(0)
		&=
		-\e \ints \nabla \cdot ((1 + \e\rho_0) u_0) \phi \dx
		\\
		&=
		\e \ints (1 + \e\rho_0) u_0 \cdot \nabla \phi \dx > 0.
	\end{split}
\end{align}
The last inequality holds true for $\e$ small enough, since
\begin{equation*}
	\lim_{\e\to0^+} \e^{-1} \left[ F'(0) + F(0) \right] = \ints u_0 \cdot \nabla \phi \dx > 0.
\end{equation*}
Our next step is to bound from below the nonlinear term in \eqref{eq:F-1}.

\subsection{Estimate for the nonlinear term}\label{subsec:estnonlin} 

First of all, we can get rid of the first term on the right-hand side of \eqref{eq:F-1} due to its positiveness. Indeed, for $n\ge2$,
\begin{equation*}
	\begin{split}
		\tr\left[(\rho u \otimes u) \nabla^2\psi \right]
		&=
		\frac{1}{n\omega_n}
		\int_{\mathbb{S}^{n-1}} \rho
		\,
		%\sum_{i,j=1}^{n} u_i \sigma_i u_j \sigma_j
		\tr\left[(u\otimes u)(\sigma \otimes \sigma)\right]
		\,
		e^{x\cdot\sigma-t} \dsig
		\\
		&=
		\frac{1}{n\omega_n}
		\int_{\mathbb{S}^{n-1}} \rho (u\cdot\sigma)^2 e^{x\cdot\sigma-t} \dsig
		\\
		&\ge0
	\end{split}
\end{equation*}
whereas of course
\begin{equation*}
	\tr\left[(\rho u \otimes u) \nabla^2\psi \right]
	= \rho u^2 \, \frac{e^{x-t} + e^{-x-t}}{2} \ge 0
\end{equation*}
in the $1$-dimensional case.
Moreover, the functions $\Y$ and $R$, defined respectively in \eqref{def:Y} and \eqref{def:R}, coincide on $[0,+\infty)$, but are different on $[-1,0)$. However, since $R(p), \Y(p) \sim \frac{\gamma-1}{2} |p|^2$ for $p \to 0$ and $R, \Y$ are bounded on $[-1,0)$, it is easily checked that
\begin{equation*}
	R(\rho-1) \approx \Y(\rho-1)
\end{equation*}
for $\rho\ge0$.
Therefore, from \eqref{eq:F-1} we get
\begin{equation}\label{eq:F-2}
	F'' + 2F' + \frac{\mu}{(1+t)^\lambda}( F' + F )
	\gtrsim
	\ints \Y(\rho-1) \, \psi \dx .
\end{equation}
If $\gamma>2$, from \eqref{eq:approxY} it is easily seen that $\Y(p) \gtrsim |p|^2$. Thus, the case $\gamma>2$ can be reduced to the case $\gamma=2$, so in the following we will assume $1<\gamma\le2$.

We want to prove now that
\begin{equation}\label{est:nonlin0}
	\ints \Y(\rho-1) \psi \dx \gtrsim \jap{t}^{-\frac{n-1}{2}} \Y(F),
\end{equation}
which inserted in \eqref{eq:F-2} would imply
\begin{equation}\label{eq:F-3}
	F'' + 2F' + \frac{\mu}{(1+t)^\lambda}( F' + F )
	\gtrsim
	\jap{t}^{-\frac{n-1}{2}} \Y(F).
\end{equation}

Now the Orlicz spaces properties comes into play.
From the inequality \eqref{eq:2.10}, the support property \eqref{eq:supp_rho}, the equivalence relation \eqref{eq:equivorlux} and the H\"older inequality in Lemma\til\ref{lem:holder}, we obtain, for a fixed positive $\alpha \in (0,2)$ to be chosen later, that
\begin{equation}\label{est:nonlin1}
	\begin{split}
		|F(t)|
		&\le
		\ints |\rho-1| \psi \dx
		\\
		&\approx
		\ints
		\left[
			\psi^{1-\alpha}
			(\Y^*)^{-1}\left( \psi^{\alpha} \right)
		\right]
		\left[
			|\rho-1|
			\Y^{-1}\left( \psi^{\alpha} \right)
		\right]
		\dx
		\\
		&\approx
		\ints
		\psi^{1-\frac\alpha2}
		\left[
		|\rho-1|
		\Y^{-1}\left( \psi^{\alpha} \right)
		\right]
		\dx
		\\
		&\lesssim
		\n{\psi^{1-\frac\alpha2}}_{L^{\Y^*}(|x|\le1+t)}
		\n{|\rho-1| \Y^{-1}\left( \psi^{\alpha} \right)}_{L^\Y(\R^n)}
		.
	\end{split}
\end{equation}
In the penultimate relation we used
\begin{equation*}
	\psi^{1-\alpha} (\Y^*)^{-1}\left( \psi^{\alpha} \right)
	\approx
	\psi^{1-\frac\alpha2}
\end{equation*}
which follows from $\psi\lesssim1$ (due to \eqref{est:psi}) and the fact that $(\Y^*)^{-1}(p) \approx |p|^{1/2}$ if $p$ is small.

Let us consider $\n{\psi^{1-\frac\alpha2}}_{L^{\Y^*}(|x|\le1+t)}$. Note that, thanks to \eqref{est:psi} and \eqref{est:intpsi}, we have
\begin{equation*}
	\frac{\psi^{1-\alpha/2}}{\left(\int_{|x|\le 1+t} \psi^{2-\alpha} \dx \right)^{1/2}}
	\lesssim
	\jap{t}^{-\frac{n-1}{2}}
	\lesssim
	1
\end{equation*}
for $|x|\le 1+t$,
and therefore, since $\Y^*(p) \approx |p|^2$ for small $|p|$,
\begin{equation*}
	\bigintss_{|x| \le 1+t}
	\Y^*\left( \frac{\psi^{1-\alpha/2}}{ \left( \int_{|x|\le 1+t} \psi^{2-\alpha}dx \right)^{1/2} } \right)
	\dx
	\lesssim
	1.
\end{equation*}
Hence, by Lemma\til\ref{lem:luxest} and using again \eqref{est:intpsi}, we gain
\begin{equation*}
	\n{\psi^{1-\frac\alpha2}}_{L^{\Y^*}(|x|\le1+t)}
	\lesssim
	\left( \int_{|x|\le 1+t} \psi^{2-\alpha} \dx \right)^{1/2}
	\lesssim
	\jap{t}^{\frac{n-1}{2}\cdot\frac{\alpha}{2}}
	.
\end{equation*}

Let us insert this information back into \eqref{est:nonlin1} to obtain
\begin{equation}\label{est:nonlin2}
	|F(t)| \lesssim \n{ |\rho-1| \Psi }_{L^\Y(\R^n)},
	\qquad
	\Psi \equiv \Psi(t,x) := \jap{t}^{\frac{n-1}{2}\cdot\frac{\alpha}{2}} \Y^{-1}(\psi^\alpha)
	,
\end{equation}
where we used the homogeneity of the norm. From \eqref{est:psi} we easily have
\begin{equation*}
	\Psi
	\approx
	\jap{t}^{\frac{n-1}{2}\cdot\frac{\alpha}{2}} \psi^{\alpha/2}
	\lesssim
	\jap{t}^{\frac{n-1}{2}\cdot\frac{\alpha}{2}}
	\jap{t}^{-\frac{n-1}{2}\cdot\frac{\alpha}{2}}
	=
	1
\end{equation*}
for $|x|\le 1+t$. From \eqref{def:X}, it follows that
\begin{equation*}
	\X(\Psi)
	%=
	%\frac{1}{\Y\left(\frac1\Psi\right)}
	\approx
	\Psi^\gamma
	\approx
	\jap{t}^{\frac{n-1}{2}\cdot\frac{\gamma}{2}\alpha} \psi^{\frac{\gamma}{2}\alpha}.
\end{equation*}
Employing the above estimate, the relations in \eqref{eq:subm} and the compactness of the support of $\rho-1$, we obtain
\begin{equation*}%\label{eq:lux-1}
	\begin{split}
		\ints \Y\left( \frac{|\rho-1| \Psi}{k} \right)
		\dx
		&\lesssim
		\int_{|x| \le 1+t}
		\Y(\rho-1) \, \X\left(\frac{\Psi}{k}\right)
		\dx
		\\
		&\lesssim
		\int_{|x| \le 1+t}
		\frac{\Y(\rho-1)}{\Y(k)}
		\, \X(\Psi)
		\dx
		\\
		&\lesssim
		\ints
		\frac{\Y(\rho-1) \psi^{\frac{\gamma}{2} \alpha} }{\jap{t}^{-\frac{n-1}{2}\cdot\frac{\gamma}{2}\alpha} \Y(k)}
		\dx
	\end{split}
\end{equation*}
for any $k>0$. Plugging into the above inequality the choices
\begin{equation*}
	\alpha = \frac{2}{\gamma},
	\qquad
	k= \Y^{-1}\left( \jap{t}^{ \frac{n-1}{2} } \ints \Y(\rho-1) \psi \dx \right),
\end{equation*}
we deduce
\begin{equation*}
	\bigintss_{\R^n} \Y\left( \frac{|\rho-1| \Psi}{\Y^{-1}\left( \jap{t}^{ \frac{n-1}{2} } \ints \Y(\rho-1) \psi \dx \right)} \right)
	\dx
	\lesssim
	1
\end{equation*}
and so, by Lemma\til\ref{lem:luxest}, we have
\begin{equation*}
	\n{|\rho-1|\Psi}_{L^\Y(\R^n)}
	\lesssim
	\Y^{-1}\left( \jap{t}^{\frac{n-1}{2}} \ints \Y(\rho-1) \psi \right),
\end{equation*}
inserting which into \eqref{est:nonlin2} gives the desired inequality \eqref{est:nonlin0}.

%%%%%

Before going on, we list some observations.

\begin{remark}
	The choice of $\alpha= 2/\gamma$ is of course forced from the fact that we need to recover the expression $\ints \Y(\rho-1) \psi \dx$, and this explain the maybe strange factorization $\psi \approx \psi^{1-\alpha} (\Y^*)^{-1} (\psi^\alpha) \Y^{-1}(\psi^\alpha)$ employed in \eqref{est:nonlin1}. Note also that $\alpha=2/\gamma$ lies in $(0,2)$ for any $\gamma>1$.
\end{remark}

\begin{remark}
	In \eqref{est:nonlin2}, taking advantage of the homogeneity of the norm to squeeze the estimate for $\n{\psi^{1-\frac\alpha2}}_{L^{\Y^*}(|x|\le 1+t)}$ inside $\n{|\rho-1| \Psi}_{L^\Y(\R^n)}$ is a necessary step in order to not lose information. Indeed, if we do not do that, we would have the estimate
	\begin{equation}\label{eq:rmkF}
		|F(t)| \lesssim \jap{t}^{\frac{n-1}{2} \cdot \frac{\alpha}{2}} \n{|\rho-1| \Y^{-1}(\psi^\alpha)}_{L^\Y(\R^n)}
	\end{equation}
	and, proceeding as above, with $\Psi$ replaced by $\Y^{-1}(\psi^\alpha) \approx \psi^{\alpha/2}$, we would obtain
	\begin{equation*}
		\n{|\rho-1| \Y^{-1}(\psi^\alpha)}_{L^\Y(\R^n)} \lesssim \Y^{-1} \left( \ints \Y(\rho-1) \psi \dx \right)
	\end{equation*}
	with again the choice $\alpha=2/\gamma$. At this point, since $\Y$ is not sub-multiplicative for $1<\gamma<2$, when we return to \eqref{eq:rmkF} we would be forced to employ again \eqref{eq:subm}, getting
	\begin{equation*}
		\Y(F) \lesssim \jap{t}^\frac{n-1}{\gamma} \ints \Y(\rho-1) \psi \dx,
	\end{equation*}
	which is a worst estimate respect to \eqref{est:nonlin0} for $1<\gamma<2$.
\end{remark}

%%%

\subsection{The multiplier}\label{subsec:multiplier} 

Now that we have \eqref{eq:F-3} at our disposal, let us consider the multiplier
\begin{equation}\label{def:m}
	\m \equiv \m(t) := \exp\left( \mu \frac{(1+t)^{1-\lambda}}{1-\lambda} \right)
\end{equation}
introduced in \cite{LaiTakamura2018}, which solves the ordinary differential equation
\begin{equation}\label{eq:odem}
	\frac{\m'(t)}{\m(t)} = \frac{\mu}{(1+t)^\lambda}
\end{equation}
and satisfies the bounds
\begin{equation}\label{eq:m-bound}
	0 < \m(0) \le \m(t) \le 1
\end{equation}
for $\mu\ge0$ and $\lambda>1$. Its role is to \lq\lq absorb'' the damping term. Indeed, adding $\frac{\mu}{(1+t)^{\lambda}} F$ on both side of \eqref{eq:F-3} and multiplying by $\m(t)$, we get
\begin{equation}\label{eq:diffmF}
	\frac{\text{d}}{\text{d}t} \left\{ \m(t) \, [F'(t)+2F(t)] \right\}
	\gtrsim
	\frac{\mu}{(1+t)^{\lambda}} \m(t) F(t) + \jap{t}^{-\frac{n-1}{2}} \m \Y(F(t)).
\end{equation}
We will soon prove that actually $F$ is non-negative, so we can get rid also of the additional first term on the right-hand side.
After an integration with respect to the time we have
\begin{equation*}
	\begin{split}
		\m(t) [F'(t)+2F(t)]
		\gtrsim&\,
		\m(0) [F'(0)+2F(0)]
		\\
		&+
		\int_0^t
		\frac{\mu}{(1+s)^{\lambda}} \m(s)F(s)
		\ds
		\\
		&+
		\int_0^t
		\jap{s}^{-\frac{n-1}{2}} \m(s)\Y(F(s)) \ds.
	\end{split}
\end{equation*}
Multiplying now by $\frac{e^{2t}}{\m(t)}$, integrating again with respect to the time, and then multiplying by $e^{-2t}$, we aim to
\begin{equation}\label{eq:F-4}
	\begin{split}
		F(t)
		\gtrsim&\,
		F(0) e^{-2t}
		+
		\m(0)[F'(0)+2F(0)]
		e^{-2t}
		\int_0^t \frac{e^{2s}}{\m(s)} \ds
		\\
		&+
		e^{-2t}
		\int_0^t \frac{e^{2s}}{\m(s)} \int_0^s \frac{\mu}{(1+r)^{\lambda}} \m(r)F(r)\dr\ds
		\\
		&+
		e^{-2t}
		\int_0^t \frac{e^{2s}}{\m(s)} \int_0^s \jap{r}^{-\frac{n-1}{2}} \m(r) \Y(F(r)) \dr\ds
		.
	\end{split}
\end{equation}

From this expression we can prove that $F(t) >0$ for $t\ge0$, employing a standard comparison argument. Indeed, due to the fact that $F(0)>0$ by our assumption on the initial data and that $F$ is continuous, we know that $F(t)>0$ at least for small $t\ge0$. Assume by contradiction that $t_0>0$ is the smallest zero point of $F$; therefore, setting $t=t_0$ in \eqref{eq:F-4} and using also that $F'(0)+2F(0)\ge0$, we get $0=F(t_0) \gtrsim F(0) e^{-2 t_0}$ --- a contradiction. Thus $F(t)>0$ for any $t\ge0$.

Thanks to this information we can suppress the third term in \eqref{eq:F-4}, and using also $\m(t) \approx 1$ (due to \eqref{eq:m-bound}) we have now
\begin{equation}\label{eq:F-5}
	\begin{split}
		F(t)
		\gtrsim&\,
		F(0) e^{-2t}
		+
		[F'(0)+2F(0)]
		\frac{1-e^{-2t}}{2}
		\\
		&+
		e^{-2t}
		\int_0^t e^{2s} \int_0^s \jap{r}^{-\frac{n-1}{2}} \Y(F(r)) \dr\ds.
	\end{split}
\end{equation}
Morally, we would like now to \lq\lq differentiate'' the above estimate to obtain a differential inequality like \eqref{eq:F-3} but without the damping term. Let us introduce the auxiliary function $\overline{F} \equiv \overline{F}(t)$ defined by
\begin{equation*}
	\begin{split}
		\overline{F}(t)
		:=&\,
		\frac{F(0)}{2} e^{-2t}
		+
		[F'(0)+2F(0)]
		\frac{1-e^{-2t}}{2}
		\\
		&+
		e^{-2t}
		\int_0^t e^{2s} \int_0^s \jap{r}^{-\frac{n-1}{2}} \Y(F(r)) \dr\ds.
	\end{split}
\end{equation*}
From its definition and \eqref{eq:F-5}, it holds
\begin{equation*}
 	F(t) \gtrsim \frac{F(0)}{2} e^{-2t} + \overline{F}(t) \ge \overline{F}(t) > 0.
\end{equation*}
It is easy to check, multiplying firstly by $e^{2t}$ and deriving, and then multiplying by $e^{-2t}$ and deriving again, that
\begin{equation*}
	\overline{F}''(t) + 2 \overline{F}'(t) = \jap{t}^{-\frac{n-1}{2}} \Y(F(t)) \gtrsim \jap{t}^{-\frac{n-1}{2}} \Y\left(\overline{F}(t)\right).
\end{equation*}
Moreover
\begin{gather*}
	\overline{F}(0) = \frac{F(0)}{2} > 0 ,
	\\
	\overline{F}'(0) = F'(0) + F(0) > 0 .
\end{gather*}
At this point, recalling \eqref{eq:approxY}, the conclusion of the proof follows from a straightforward application of the next lemma, which is a variation of Theorem\til3.1 in \cite{LiZhou1995}, with a non-linear term which is allowed to behave like two different powers when its argument is respectively small or large. Since the proof of the lemma follows step by step the one in \cite{LiZhou1995} with minor changes, we include the demonstration for the sake of completeness but we postpone it in Appendix\til\ref{app:proof-lizhou-var}.

\begin{lemma}\label{lem:lizhou-var}
	Let $0\le \lambda \le 1$. Assume that $I \colon [0,+\infty) \to \R$ satisfies
	\begin{equation}\label{eq:lz}
		I''(t) + I'(t) \gtrsim (1+t)^{-\lambda} N(I(t))
	\end{equation}
	where $N(p), N'(p)>0$ for $p>0$ and
	\begin{gather*}
		N(p) \approx
		\begin{cases}
			p^{1+\alpha} &\text{if $0\le p \le 1$,}
			\\
			p^{1+\beta} &\text{if $p > 1$,}
		\end{cases}
	\end{gather*}
	for some $\alpha,\beta>0$.
	Suppose also
	\begin{equation*}
		I(0) = \e >0, \qquad I'(0) \ge 0.
	\end{equation*}
	Then, $I(t)$ blows up in a finite time. Moreover, if $\e>0$ is small enough, the lifespan $T_\e$ of $I(t)$ satisfies the upper bound
	\begin{equation*}
		T_\e \le
		\left\{
		\begin{aligned}
			&C \e^{-\ltfrac{\alpha}{1-\lambda}} &&\text{if $0\le\lambda<1$,}
			\\
			&\exp(C \e^{-\alpha}) &&\text{if $\lambda=1$,}
		\end{aligned}
		\right.
	\end{equation*}
	where $C$ is a positive constant independent of $\e$.
\end{lemma}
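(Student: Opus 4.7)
My plan is to adapt the Li--Zhou argument of \cite{LiZhou1995} to the two-regime nonlinearity $N$, treating the small-argument asymptotic $N(p)\approx p^{1+\alpha}$ (which governs the $\e$-dependence of the lifespan) and the large-argument asymptotic $N(p)\approx p^{1+\beta}$ (which only has to close the blow-up in a further bounded time) in succession. I first rewrite the differential inequality as $(e^{t}I'(t))' \gtrsim e^{t}(1+t)^{-\lambda} N(I(t))$; combining with $I(0)=\e>0$, $I'(0)\ge0$ and the positivity of $N$ on $(0,+\infty)$, a standard continuity/sign argument shows that $I'(t)\ge 0$ and hence $I(t)\ge\e$ on the maximal existence interval.

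In the range $I\le 1$ one has $N(I)\gtrsim I^{1+\alpha}\ge \e^{1+\alpha}$. Integrating $(e^{t}I')'\gtrsim \e^{1+\alpha}e^{t}(1+t)^{-\lambda}$ twice --- and invoking the elementary lower bound $\int_0^t e^s(1+s)^{-\lambda}\ds \gtrsim e^{t}(1+t)^{-\lambda}$, which holds for $\lambda\le 1$ and $t$ bounded away from $0$ by integration by parts --- yields
\begin{equation*}
I(t) \gtrsim \e^{1+\alpha}\int_0^t (1+s)^{-\lambda}\ds.
\end{equation*}
I then iterate this as a doubling argument: assuming $I(\tau_k)\ge\e_k:=2^k\e\le 1$ and rerunning the same computation on $[\tau_k,\tau_{k+1}]$, where $\tau_{k+1}$ is the first time at which $I$ doubles to $2\e_k$, I obtain
\begin{equation*}
(1+\tau_{k+1})^{1-\lambda}-(1+\tau_k)^{1-\lambda} \gtrsim \e_k^{-\alpha} \quad (\lambda<1), \qquad \log\frac{1+\tau_{k+1}}{1+\tau_k} \gtrsim \e_k^{-\alpha} \quad (\lambda=1).
\end{equation*}
Telescoping over $k=0,\dots,k^{*}-1$, with $k^{*}\approx\log_2(1/\e)$ chosen so that $\e_{k^{*}}\approx 1$, and using $\sum_{k\ge 0}\e_k^{-\alpha}=\e^{-\alpha}\sum_{k\ge 0}2^{-k\alpha}<\infty$ (convergent since $\alpha>0$), I reach $I(\tau_{k^{*}})\ge 1$ at time $\tau_{k^{*}}\lesssim \e^{-\alpha/(1-\lambda)}$, respectively $\tau_{k^{*}}\lesssim\exp(C\e^{-\alpha})$.

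From time $\tau_{k^{*}}$ onwards, $N(I)\gtrsim I^{1+\beta}$ with $I\ge 1$, and a direct application of the classical single-power Li--Zhou lemma (or an explicit bootstrap in the ODI) produces blow-up within a further bounded time, negligible with respect to $\tau_{k^{*}}$; this gives the claimed upper bound on $T_\e$. The main delicate point is the uniformity of the implicit constants across the iteration: the constants in $N(p)\approx p^{1+\alpha}$, in the lower bound on $\int_0^t e^s(1+s)^{-\lambda}\ds$, and in the ODI \eqref{eq:lz} are all absolute and independent of $k$, so that the geometric summation really closes. A minor extra care is needed around the transition $\e_k \approx 1$ between the $\alpha$- and $\beta$-regimes, which however only involves a bounded number of bounded increments and is absorbed in the constants.
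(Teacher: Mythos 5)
Your argument is essentially correct, but it follows a genuinely different route from the paper. The paper proves the lemma by comparison with an explicitly solvable first-order ODE $J'=\eta M(J)$ built from a ``half-power'' version of $N$ (exponents $1+\alpha/2$, $1+\beta/2$), combined with the two Li--Zhou comparison lemmata, a scaling $K(t)=\e^{-1}I(\e^{-\alpha}t)$ that exploits the super-multiplicativity of $N$ to extract the $\e$-dependence, and a change of time variable $t\mapsto (1+t)^{1/(1-\lambda)}-1$ (resp. $e^t-1$) to reduce $0<\lambda\le1$ to $\lambda=0$. You instead integrate the inequality $(e^tI')'\gtrsim e^t(1+t)^{-\lambda}N(I)$ directly and run a dyadic doubling bootstrap in the regime $I\le1$, which handles the time weight and the $\e$-dependence simultaneously and dispenses with both the scaling and the time change; the price is that you must track the uniformity of constants across the iteration, which you correctly identify as the delicate point. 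Each approach is standard for such ODI lemmata and both close the argument.

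Two small points to fix. First, the displayed inequality in your doubling step has the wrong direction: for the telescoping to yield an \emph{upper} bound on $\tau_{k^*}$ you need
\begin{equation*}
	(1+\tau_{k+1})^{1-\lambda}-(1+\tau_k)^{1-\lambda}\lesssim \e_k^{-\alpha},
\end{equation*}
which is indeed what your computation gives (doubling is guaranteed once the increment of $\int(1+s)^{-\lambda}\ds$ reaches $C\e_k^{-\alpha}$, so the first doubling time satisfies the reverse bound); as written with ``$\gtrsim$'' the telescoping would only produce a useless lower bound. Second, in the final $\beta$-stage the remaining time is not bounded when $\lambda\in(0,1]$ --- repeating the doubling argument there gives $(1+T)^{1-\lambda}-(1+\tau_{k^*})^{1-\lambda}\lesssim 1$, i.e.\ an additional time of order $(1+\tau_{k^*})^{\lambda}$ --- but, as you say, this is $o(\tau_{k^*})$ and does not affect the stated lifespan bound. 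Neither issue undermines the proof.
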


%%%%%%%%%%%%%%%%%%%%%%%%%%%%%%%%%%
%%%%%%%%%%%%%%%%%%%%%%%%%%%%%%%%%%
%%%%%%%%%%%%%%%%%%%%%%%%%%%%%%%%%%

\section{Proof of Theorem\til\ref{thm2}}

First of all, notice that, in the case $\lambda=1$, the solution to the ODE \eqref{eq:odem} is given by
\begin{equation*}
	\m \equiv \m(t) = (1+t)^{\mu},
\end{equation*}
which is unbounded for $\mu>0$. Anyway, the inequality \eqref{eq:diffmF} still holds but with $\lambda=1$, and so also \eqref{eq:F-4}.
Hence, with the same comparison argument in Subsection\til\ref{subsec:multiplier} we can deduce that
\begin{equation*}
	F(t) := \ints (\rho-1) \psi \dx >0
\end{equation*}
for $t\ge0$.
The role of $\m$ as multiplier in the case $\lambda=1$ was only to prove the positivity of $F$.
With this information in our hands, let us go back to \eqref{eq:F-3} and this time we use as multiplier $\sqrt{\m(t)} = (1+t)^{\mu/2}$.
Define the functional
\begin{equation*}
	G(t) := \sqrt{\m(t)} F(t) = (1+t)^{\mu/2} F(t),
\end{equation*}
which of course inherits from $F$ its positiveness and the same blow-up dynamic. Multiplying both side of \eqref{eq:F-3} by $\sqrt{\m}$, we obtain
\begin{equation}\label{eq:G1}
	G'' + 2G' + \frac{\mu(2-\mu)/4}{(1+t)^2} G
	\gtrsim
	\jap{t}^{-\frac{n-1}{2}+\frac{\mu}{2}} \Y(F)
	\gtrsim
	\jap{t}^{-\frac{n+\mu-1}{2}} \Y(G),
\end{equation}
where we used also \eqref{eq:superm} and that $\Y((1+t)^{-\mu/2}) \approx (1+t)^{-\mu}$. The use of $\sqrt{\m}$ is connected to a Liouville-type transform, often employed in the study of the scaling-invariant damped wave equation. For example, D'Abbicco, Lucente and Reissig in \cite{DAbbiccoLucenteReissig2015} inaugurated the beginning of a series of works by various authors where the case $\mu=2$ is considered. This is due to the fact that this choice simplifies the analysis of the problem, making it related to the undamped wave equation. In our case, setting $\mu=2$ would eliminate the third term in the left-hand side of \eqref{eq:G1}. However, since we are dealing with $\mu\le 3-n$, we need another way to suppress the massive term in \eqref{eq:G1}.

Let us introduce the new multiplier (well, actually is a relative of $\m$ in \eqref{def:m} with $\lambda=2$) defined by
\begin{equation*}
	\elle \equiv \elle(t)
	:=
	\exp\left( - \frac{\mu(2-\mu)/8}{1+t} \right) .
\end{equation*}
Observe that $\elle$ solves the ODE
\begin{equation*}
	\frac{\elle'(t)}{\elle(t)} = \frac{\mu(2-\mu)/8}{(1+t)^2}
\end{equation*}
and satisfies the bounds
\begin{equation}\label{eq:mmbound}
	0 < \elle(0) \le \elle(t) \le 1 ,
\end{equation}
since $0 \le \mu \le 3-n \le 2$ for $n\in\{1,2\}$.
Multiplying \eqref{eq:G1} by $\elle(t)$ we obtain
\begin{equation*}
	\elle G'' + 2 (\elle G)' \gtrsim \elle \jap{t}^{-\frac{n+\mu-1}{2}} \Y(G)
\end{equation*}
and hence
\begin{equation}\label{eq:G2}
	(\elle G)'' + 2 (\elle G)' \gtrsim \elle''G + 2\elle'G' + \elle \jap{t}^{-\frac{n+\mu-1}{2}} \Y(G).
\end{equation}
We would like to get rid now of the first two terms in the right-hand side of the above inequality. Let us consider another multiplier $\p$, defined by
\begin{equation*}
	\p(t) := (1+t) \exp\left( \frac{\mu(2-\mu)/16}{1+t} \right),
\end{equation*}
which satisfies the ODE
\begin{equation*}
	\frac{\p'(t)}{\p(t)}
	=
	-\frac12
	\frac{\elle''(t)}{\elle'(t)}
	=
	\frac{1}{1+t} - \frac{\mu(2-\mu)/16}{(1+t)^2}
	.
\end{equation*}
It is straightforward to check that
\begin{equation*}
	\elle'' G + 2 \elle' G' = \frac{2}{\p} \frac{\text{d}}{\text{d}t} \left\{ \p \elle' G \right\},
\end{equation*}
and so, integrating by parts the above identity, it holds
\begin{equation*}
	\int_0^t \left[\elle'' G + 2 \elle' G' \right] \ds
	=
	2\elle'(t)G(t) - 2\elle'(0)G(0) + 2 \int_0^t \frac{\p'}{\p} \elle' G \ds
	.
\end{equation*}
Noting that $G$, $\elle'$ and $\frac{\p'}{\p}$ are positive functions, we have
\begin{equation}\label{eq:G3}
	\int_0^t \left[\elle'' G + 2 \elle' G' \right] \ds
	\ge
	- 2\elle'(0)G(0)
	=
	- \frac{\mu(2-\mu)}{4} \elle(0) F(0).
\end{equation}
Integrating \eqref{eq:G2} with respect to the time and taking into account \eqref{eq:G3}, it follows
\begin{equation}\label{eq:G4}
	(\elle G)'(t) + 2 (\elle G)(t)
	\gtrsim
	g_0
	+
	\int_{0}^{t}
	\elle(s)
	\jap{s}^{-\frac{n+\mu-1}{2}} \Y(G(s)) \ds
	,
\end{equation}
where
\begin{equation*}
	\begin{split}
		g_0
		:=&\,
		(\elle G)'(0) + 2 (\elle G)(0)
		-
		\frac{\mu(2-\mu)}{4} \elle(0) F(0)
		\\
		=&\,
		\elle(0) 
		\left[
		F'(0)
		+
		F(0)
		+
		\frac{\mu^2+2\mu+8}{8} F(0)
		\right]
		>
		0
		.
	\end{split}
\end{equation*}
Multiplying \eqref{eq:G4} by $e^{2t}$, integrating and multiplying by $e^{-2t}$, we get
\begin{equation*}
	\begin{split}
		\elle(t) G(t)
		\gtrsim&\,
		\elle(0) G(0) e^{-2t}
		+
		g_0
		\frac{1-e^{-2t}}{2}
		\\
		&+
		e^{-2t}
		\int_0^t
		e^{2s}
		\int_0^s
		\elle(r)
		\jap{r}^{-\frac{n+\mu-1}{2}}
		\Y(G(r))
		\dr\ds,
	\end{split}
\end{equation*}
and equivalently, using \eqref{eq:mmbound},
\begin{equation*}
	\begin{split}
		G(t)
		\gtrsim&\,
		F(0) e^{-2t}
		+		
		\left[
		F'(0)
		+
		\frac{\mu^2+2\mu+16}{8}
		F(0)
		\right]
		\frac{1-e^{-2t}}{2}
		\\
		&+
		e^{-2t}
		\int_0^t
		e^{2s}
		\int_0^s
		\jap{r}^{-\frac{n+\mu-1}{2}}
		\Y(G(r))
		\dr\ds
		.
	\end{split}
\end{equation*}
At this stage, the conclusion of the proof follows exactly as that in Subsection\til\ref{subsec:multiplier}. Namely, introduce the auxiliary function $\overline{G} \equiv \overline{G}(t)$ defined by
\begin{equation*}
	\begin{split}
		\overline{G}(t)
		:=&\,
		\frac{F(0)}{2} e^{-2t}
		+		
		\left[
		F'(0)
		+
		\frac{\mu^2+2\mu+16}{8}
		F(0)
		\right]
		\frac{1-e^{-2t}}{2}
		\\
		&+
		e^{-2t}
		\int_0^t
		e^{2s}
		\int_0^s
		\jap{r}^{-\frac{n+\mu-1}{2}}
		\Y(G(r))
		\dr\ds
	\end{split}
\end{equation*}
and note that
\begin{equation*}
	G(t) \gtrsim \frac{F(0)}{2} e^{-2t} + \overline{G}(t) \ge \overline{G}(t) >0.
\end{equation*}
Moreover $\overline{G}$ satisfies
\begin{equation*}
	\overline{G}''(t) + 2 \overline{G}'(t)
	=
	\jap{t}^{-\frac{n+\mu-1}{2}} \Y(G(t))
	\gtrsim
	\jap{t}^{-\frac{n+\mu-1}{2}} \Y(\overline{G}(t)),
\end{equation*}
together with
\begin{equation*}
	\begin{aligned}
		\overline{G}(0) &= \frac{F(0)}{2} > 0
		\\
		\overline{G}'(0) &= F'(0) + F(0) + \frac{\mu(\mu+2)}{8}F(0) >0
		.
	\end{aligned}
\end{equation*}
Another application of Lemma\til\ref{lem:lizhou-var} concludes our proof.

%%%%%%%%%%%%%%%%%%%%%%%%%%%%%%%%%%
%%%%%%%%%%%%%%%%%%%%%%%%%%%%%%%%%%
%%%%%%%%%%%%%%%%%%%%%%%%%%%%%%%%%%

\appendix
\renewcommand*{\thesection}{\Alph{section}}

\section{Proof of Lemma\til\ref{lem:lizhou-var}} \label{app:proof-lizhou-var}

Before starting the proof, we need the following two lemmata, which are a slight generalization of Lemmata\til3.1,\til3.1'\til and\til3.2 in \cite{LiZhou1995}.
The proof is exactly the same as the one by Li and Zhou, based on a simple comparison argument, so we will not repeat it. In the referred paper, $N(p)=p^{1+\alpha}$ for some $\alpha>0$, but in the end the only property exploited is the fact that $N$ is positive and increasing on the positive interval.

\begin{lemma}\label{lem:comparison}
	Consider two functions $h, k \colon [0,+\infty) \to \R$ satisfying
	\begin{align*}
		a(t) h''(t) + h'(t) &\le b(t) N(h(t))
		\\
		a(t) k''(t) + k'(t) &\ge b(t) N(k(t))
	\end{align*}
	for any $t\ge0$, where
	\begin{equation*}
		a(t) > 0, \qquad b(t) > 0,
	\end{equation*}	
	for $t\ge0$ and
	\begin{equation*}
		N(p) > 0, \qquad N'(p) > 0,
	\end{equation*}
	for $p>0$.
	Suppose also that one of the following couples of assumptions on the initial data holds true:
	\begin{equation*}
		k(0) > h(0), \qquad k'(0) \ge h'(0) ,
	\end{equation*}
	or
	\begin{equation*}
		k(0) \ge h(0), \qquad k'(0) > h'(0) .
	\end{equation*}
	Then it holds
	\begin{equation*}
		k'(t) > h'(t)
	\end{equation*}
	for any $t>0$.
\end{lemma}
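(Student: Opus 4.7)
The plan is to run a standard comparison argument. Setting $w(t) := k(t) - h(t)$ and subtracting the two given differential inequalities, I would obtain
$$ a(t) w''(t) + w'(t) \ge b(t) \bigl( N(k(t)) - N(h(t)) \bigr). $$
On any interval on which $w > 0$ (i.e.\ $k > h$), the assumption $N' > 0$ on $(0,+\infty)$ yields $N(k) > N(h)$ pointwise, and combined with $b > 0$ this upgrades to the \emph{strict} ODE inequality $a(t) w''(t) + w'(t) > 0$.

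The key device to convert this into a statement about the sign of $w'$ is an integrating factor. Setting $\Phi(t) := \int_0^t \frac{ds}{a(s)}$ (well defined and non-negative since $a > 0$), a direct computation gives
$$ \frac{d}{dt}\bigl[ e^{\Phi(t)} w'(t) \bigr] = \frac{e^{\Phi(t)}}{a(t)} \bigl( a(t) w''(t) + w'(t) \bigr) > 0 $$
on the interval where $w > 0$. Thus $e^{\Phi(t)} w'(t)$ is strictly increasing there; comparing with its initial value $e^{\Phi(0)} w'(0) = w'(0) \ge 0$, I would conclude $w'(t) > 0$ (strictly) for all $t > 0$ in that interval. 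Note that in the first case of assumptions the inequality $w'(0) \ge 0$ is enough because the strictness of the monotonicity compensates, while in the second case we actually have $w'(0) > 0$.

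The last step is a bootstrap/continuation argument to show that the interval on which $w > 0$ is all of $[0,+\infty)$. In the first case ($w(0) > 0$, $w'(0) \ge 0$), continuity of $w$ immediately produces a maximal open interval $[0, T^*)$ on which $w > 0$. In the second case ($w(0) \ge 0$, $w'(0) > 0$), continuity of $w'$ gives an interval $(0,\delta)$ on which $w' > 0$, whence $w(t) > w(0) \ge 0$ there, reducing matters to the first case. On $[0,T^*)$ the previous paragraph shows $w'(t) > 0$, so $w$ is strictly increasing and stays bounded below by $w(0)$ (in case one) or by $w(\delta/2)$ (in case two), a strictly positive constant. If $T^* < +\infty$, continuity would force $w(T^*) = 0$, contradicting this lower bound. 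Hence $T^* = +\infty$, and $w'(t) > 0$ for all $t > 0$, which is precisely the conclusion. The only mildly delicate point, and what I would flag as the main obstacle, is this bootstrap --- making sure the positivity of $w$ persists so that the monotonicity of $N$ can keep being invoked --- but once that is in place everything reduces to the integrating-factor computation.
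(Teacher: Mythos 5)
Your argument is correct and is essentially the proof the paper intends: the paper simply defers to Li--Zhou's ``simple comparison argument,'' and your integrating factor $e^{\Phi(t)}$ with $\Phi(t)=\int_0^t a(s)^{-1}\,ds$, combined with the maximal-interval bootstrap for the positivity of $w=k-h$, is a clean packaging of exactly that argument. The one point to watch is that $N$ is only assumed increasing on $(0,+\infty)$, so the step $N(k(t))>N(h(t))$ tacitly uses that $h$ stays nonnegative (with $N(0)=\inf N$) on the interval in question --- an imprecision already present in the lemma's statement and harmless in every application in the paper, where $h$ is nonnegative.
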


\begin{lemma}\label{lem:h''pos}
	Consider a function $h \colon [0,+\infty) \to \R$ satisfying
	\begin{equation*}
		a(t) h''(t) + h'(t) = c N(h(t))
	\end{equation*}
	for any $t\ge0$, where $c$ is a positive constant and $a, N$ satisfy the same assumptions as in Lemma\til\ref{lem:comparison}.
	Suppose also that
	\begin{equation*}
		h(0)>0, \qquad h'(0)=0.
	\end{equation*}
	Then it holds
	\begin{equation*}
		h''(t) > 0
	\end{equation*}
	for any $t\ge0$.
\end{lemma}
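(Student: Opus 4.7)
\bigskip

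\noindent\textbf{Proof plan for Lemma \ref{lem:h''pos}.}
The idea is to track the sign of the auxiliary quantity
\begin{equation*}
	\Phi(t) := c N(h(t)) - h'(t) = a(t) h''(t),
\end{equation*}
which (since $a(t)>0$) has the same sign as $h''(t)$. First I would note that $\Phi(0) = c N(h(0)) - 0 > 0$ thanks to $h(0)>0$ and the positivity of $N$ on $(0,+\infty)$. By the continuity of $\Phi$, there is an initial interval on which $\Phi>0$, hence $h''>0$, hence $h'$ is strictly increasing and consequently (using $h'(0)=0$) positive; in turn $h$ itself is strictly increasing there.

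The main step is a contradiction argument. Assume for contradiction that $t_0 := \inf\{t\ge 0 : \Phi(t)\le 0\}$ is finite. Continuity forces $t_0>0$ and $\Phi(t_0)=0$, i.e.\ $h''(t_0)=0$ and $c N(h(t_0)) = h'(t_0)$. By the preliminary step, $h''>0$ on $[0,t_0)$, so $h'$ is strictly increasing on $[0,t_0]$; since $h'(0)=0$ this yields $h'(t_0)>0$, and $h(t_0)>h(0)>0$. Now differentiate $\Phi$:
\begin{equation*}
	\Phi'(t) = c N'(h(t)) h'(t) - h''(t),
\end{equation*}
and evaluate at $t_0$:
\begin{equation*}
	\Phi'(t_0) = c N'(h(t_0)) h'(t_0) - 0 > 0,
\end{equation*}
because $N'>0$ on $(0,+\infty)$ and $h'(t_0)>0$. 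On the other hand, since $\Phi>0$ on $[0,t_0)$ and $\Phi(t_0)=0$, the left derivative of $\Phi$ at $t_0$ must satisfy $\Phi'(t_0^-)\le 0$. This contradicts $\Phi'(t_0)>0$.

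Hence no such $t_0$ exists, and therefore $\Phi(t)>0$ for all $t\ge 0$, which is precisely $h''(t)>0$ for all $t\ge 0$. The only delicate point is the differentiability of $\Phi$ at $t_0$, which is guaranteed by the standing regularity of $h$ (solution of a second-order ODE, so $h\in C^2$) and of $N$ (since $N'$ exists by hypothesis); apart from that, the argument is purely a monotonicity/sign chase, so I do not foresee any further obstacle.
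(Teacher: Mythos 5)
Your proof is correct and follows the same first-touching-point argument as Li--Zhou's Lemma~3.2, to which the paper defers without reproducing the proof. Working with $\Phi = cN(h) - h' = a\,h''$ rather than differentiating the ODE directly is a nice touch, since it avoids any smoothness assumption on $a(t)$; the differentiability of $\Phi$ at $t_0$ is indeed guaranteed because $h(t_0) > h(0) > 0$ and $N'$ exists on $(0,+\infty)$.
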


%%%%%%%%%%%%%%%%%%%%%%%%%%%%%%%%%%%%%%%%%

\begin{proof}[Proof of Lemma\til\ref{lem:lizhou-var}]
	First of all, note that if $\alpha<\beta$ then $N(p) \gtrsim p^{1+\alpha}$, so we can reduce to the case $\alpha=\beta$. We will consider then only the case $\alpha\ge\beta$. Moreover, it is not restrictive to assume $I'(0)>0$. If indeed $I(0)=0$, then we have $I''(0) \gtrsim N(I(0)) > 0$, from which it follows that $I(t), I'(t)$ and $I''(t)$ are positive for small $t$. In particular, there exists a small $t_0>0$ such that $I(t_0), I'(t_0) >0$, so we can choose this $t_0$ as initial time.
	
	\medskip
	
	\emph{Case $\lambda=0$.}
	Let us firstly consider the auxiliary problem
	\begin{equation}\label{eq:cauchyJ}
		\left\{
		\begin{aligned}
			&J'(t) = \eta M(J(t))
			\\
			&J(0) = J_0,
		\end{aligned}
		\right.
	\end{equation}
	where $\eta, J_0>0$ and
	\begin{gather*}
		M(p) :=
		\begin{cases}
			p^{1+\alpha/2} &\text{if $0\le p \le 1$,}
			\\
			p^{1+\beta/2} &\text{if $p > 1$.}
		\end{cases}
	\end{gather*}
	Define the function $\mathcal{M} \colon [0,+\infty) \to \R$ as
	\begin{equation*}
		\mathcal{M}(p) := \int_p^{+\infty} \frac{\text{d}q}{M(q)},
	\end{equation*}
	and note that this is a strictly positive and strictly decreasing function such that $\lim_{p\to +\infty} \mathcal{M}(p) = 0^+$ (and so, vice versa, $\lim_{p\to 0^+} \mathcal{M}^{-1}(p) = +\infty$). Then, we immediately have that
	\begin{equation*}
		J(t) = \mathcal{M}^{-1} \left( \mathcal{M}(J_0) - \eta t \right)
	\end{equation*}
	blows up in finite time.
	
	Let us go back to our original problem for $\lambda=0$.
	We want to show the blow-up in finite time of $I$ satisfying
	\begin{equation}\label{eq:c3}
		%\left\{
		%	\begin{aligned}
			I''(t) + I'(t) \gtrsim N(I(t)).
			%	\end{aligned}
		%\right.
	\end{equation}
	At this aim, it will be sufficient to show that $I(t) > J(t)$ in the existence domain, where $J$ solves \eqref{eq:cauchyJ}. Additionally, assume now
	\begin{equation}\label{eq:c4}
		J(0) < I(0)
	\end{equation}
	and $\eta < I'(0)/M(J_0)$ so that
	\begin{equation}\label{eq:c5}
		J'(0) = \eta M(J(0)) < I'(0).
	\end{equation}
	We have that
	\begin{equation}\label{eq:c2}
		J''(t) + J'(t) = \eta [ \eta M'(J(t)) + 1] M(J(t)) = O(J(t)) \, N(J(t)),
	\end{equation}
	where, using the definition of $M$ and the assumptions on $N$,
	\begin{equation*}
		\begin{aligned}
			O(p) &:= \eta [ \eta M'(p) + 1] \frac{M(p)}{N(p)}
			\\
			&\sim
			\left\{
			\begin{aligned}
				&\eta^2 (1+\alpha/2) + \eta p^{-\alpha/2} &&\text{if $p \to 0^+$}
				\\
				&\eta^2 (1+\beta/2) + \eta p^{-\beta/2} &&\text{if $p \to +\infty$}
			\end{aligned}
			\right.
			\\
			&\lesssim
			1+p^{-\alpha/2}+p^{-\beta/2}
			.
		\end{aligned}
	\end{equation*}
	Since $J$ is positive and increasing in its existence domain, from \eqref{eq:c2} we have
	\begin{equation}\label{eq:c6}
		J''(t) + J'(t) \lesssim (1+J_0^{-\alpha/2}+J_0^{-\beta/2}) N(J(t)) \lesssim N(J(t)).
	\end{equation}
	With \eqref{eq:c3}, \eqref{eq:c4}, \eqref{eq:c5}, \eqref{eq:c6} in our hand, an application of Lemma\til\ref{lem:comparison} gives that $I'(t) > J'(t)$ for $t\ge0$, and hence by \eqref{eq:c4} we get $I(t) > J(t)$ for $t\ge0$, proving the blow-up in finite time of $I$.
	
	To get the estimate for the lifespan, a scaling argument is required. Firstly, note that, following the proof of Lemma\til\ref{lem:supersubmolt}, one can easily see that $N$ is super-multiplicative, apart for a multiplicative constant, for $\alpha\ge\beta$, namely:
	\begin{equation}\label{eq:multN}
		N(pq) \gtrsim N(p)N(q)
	\end{equation}
	for $p,q \ge 0$. Thus, if we define
	\begin{equation*}
		K(t) = \e^{-1} I(\e^{-\alpha} t),
	\end{equation*}
	%and so
	%\begin{equation*}
	%	K' = \e^{-1-\alpha} I'(\e^{\max\{\alpha,\beta\}} t),
	%	\qquad
	%	K'' = \e^{-1-2\alpha} I''(\e^{\max\{\alpha,\beta\}} t),
	%\end{equation*}
 	we get, from \eqref{eq:c3} and \eqref{eq:multN},
	\begin{equation*}
		\e^{\alpha} K''(t) + K'(t)
		\gtrsim
		\e^{-1-\alpha}
		N( \e K(t) )
		\gtrsim
		\e^{-1-\alpha} N(\e) N(K(t))
		\gtrsim
		N(K(t)),
	\end{equation*}
	where we used also that $\e>0$ is small together with the assumptions on $N$. Thus, for some $c_0>0$, we have that $K$ satisfy
	\begin{equation*}%\label{eq:cauchyK}
		\left\{
		\begin{aligned}
			& \e^{\alpha} K''(t) + K'(t) \ge c_0 N(K(t))
			\\
			& K(0) = 1, \qquad K'(0) = \e^{-1-\alpha} I'(0) >0.
		\end{aligned}
		\right.
	\end{equation*}
	Consider the auxiliary function $\overline{K}$ satisfying
	\begin{equation*}%\label{eq:cauchyoK}
		\left\{
		\begin{aligned}
			& \e^{\alpha} \overline{K}''(t) + \overline{K}'(t) = c_0 N(K(t))
			\\
			& \overline{K}(0) = 1, \qquad \overline{K}'(0) = 0.
		\end{aligned}
		\right.
	\end{equation*}
	By Lemma\til\ref{lem:h''pos} it holds $\overline{K}''(t) > 0$ for $t\ge0$, and so $\overline{K}$ satisfies
	\begin{equation*}%\label{eq:cauchyoK}
		\left\{
		\begin{aligned}
			& \overline{K}''(t) + \overline{K}'(t) \gtrsim N(K(t))
			\\
			& \overline{K}(0) = 1, \qquad \overline{K}'(0) = 0.
		\end{aligned}
		\right.
	\end{equation*}
	Therefore, according to the previous discussion, $\overline{K}$ must blow up in a finite time which does not depend on $\e$. By Lemma\til\ref{lem:comparison} we have $K'(t) > \overline{K}'(t)$ for $t\ge0$, and so $K(t) > \overline{K}(t)$ for $t>0$. Then also $I(\e^{-\alpha}t)$ blows up in a finite time independent of $\e$, from which it follows that the lifespan estimate for $I$ satisfies
	%\begin{equation*}
	$
		T_\e \lesssim \e^{-\alpha}
	$
	%\end{equation*}
	as desired.

	\medskip
	
	\emph{Case $0<\lambda \le 1$.} Let us define
	\begin{equation*}
		J(t) :=
		\left\{
			\begin{aligned}
				& I( (1+t)^{1/(1-\lambda)} -1 ) &&\text{if $0<\lambda<1$,}
				\\
				& I(e^{t}-1) &&\text{if $\lambda=1$,}
			\end{aligned}
		\right.
	\end{equation*}
	for which it holds, thanks to \eqref{eq:lz},
	\begin{equation*}
		\left\{
		\begin{aligned}
			& a(t) J''(t) + b(t) J'(t) \gtrsim N(J(t))
			\\
			& J(0) = \e > 0, \qquad J'(0) >0.
		\end{aligned}
		\right.
	\end{equation*}
	where
	\begin{align*}
		a(t)
		&:=
		\left\{
		\begin{aligned}
			& (1-\lambda)^2 (1+t)^{-\frac{2\lambda}{1-\lambda}} &&\text{if $0<\lambda<1$,}
			\\
			& e^{-t} &&\text{if $\lambda=1$,}
		\end{aligned}
		\right.
		\\
		b(t)
		&:=
		\left\{
		\begin{aligned}
			& (1-\lambda) (1+t)^{-\frac{\lambda}{1-\lambda}} \left[ 1 - \lambda (1+t)^{-\frac{1}{1-\lambda}} \right] &&\text{if $0<\lambda<1$,}
			\\
			& 1-e^{-t} &&\text{if $\lambda=1$,}
		\end{aligned}
		\right.
	\end{align*}
	and notice that
	\begin{equation*}
		0 < a(t), \, b(t) \le 1.
	\end{equation*}
	Choosing $k=I$ and $h \equiv 0$ in Lemma\til\ref{lem:comparison}, we get $I'(t)>0$ for $t>0$, and so also $J'(t)>0$ for $t>0$, from which
	\begin{equation*}
		a(t) J''(t) + J'(t) \ge c_0 N(J(t))
	\end{equation*}
	for some constant $c_0 >0$.
	If we consider the auxiliary function $\overline{J}$ satisfying
	\begin{equation*}
		\left\{
		\begin{aligned}
			& a(t) \overline{J}''(t) + \overline{J}'(t) = c_0 N(\overline{J}(t))
			\\
			& \overline{J}(0) = \e/2 > 0, \qquad \overline{J}'(0) = 0,
		\end{aligned}
		\right.
	\end{equation*}
	by Lemma\til\ref{lem:comparison} we have $J'(t) > \overline{J}'(t)$ for $t>0$, and so $J(t) > \overline{J}(t)$ for $t \ge 0$. By Lemma\til\ref{lem:h''pos} instead we have $\overline{J}''(t) > 0$ for $t \ge 0$, and so
	\begin{equation*}
		\overline{J}''(t) + \overline{J}'(t) \gtrsim N(\overline{J}(t)).
	\end{equation*}
	We can apply the case $\lambda=0$ to $\overline{J}$, obtaining that $\overline{J}$ blows up, with lifespan satisfying $T_\e \lesssim \e^{-\alpha}$ if $\e>0$ is small. The same is then true for $J$, and so $I$ blows up with the lifespan estimate given in the statement of the lemma.
\end{proof}

\section*{Acknowledgement}

Ning-An Lai was partially supported by NSFC (12271487 and 12171097). %National Natural Science Fundation of China
Nico Michele Schiavone is an International Research Fellow of Japan Society for the Promotion of Science (JSPS). He is also member of the \lq\lq Gruppo Nazionale per l'Analisi Matematica, la Probabilità e le loro Applicazioni'' (GNAMPA) of the \lq\lq Istituto Nazionale di Alta Matematica'' (INdAM).

\section*{References}

\bibliographystyle{abbrv} %abbrv %alpha %apalike %plain

\bibliography{compressibleeuler}

\end{document}